\def\NAT@def@citea{\def\@citea{\NAT@separator}}
\theoremstyle{plain}
\newtheorem{theorem}{Theorem}[section]
\newtheorem{lemma}[theorem]{Lemma}
\theoremstyle{definition}
\newtheorem{definition}[theorem]{Definition}
\newtheorem{example}[theorem]{Example}
\theoremstyle{remark}
\newtheorem{assumption}{Assumption}
\begin{document}


\title{Sparse least squares solutions of multilinear equations}

\author{
\name{Xin Li, Ziyan Luo \thanks{CONTACT Ziyan Luo. Email: zyluo@bjtu.edu.cn} and Yang Chen}
\affil{School of Mathematics and Statistics, Beijing Jiaotong University, Beijing, 100044, China}
}

\maketitle

\begin{abstract}
In this paper, we propose a sparse least squares (SLS) optimization model for solving multilinear equations, in which the sparsity constraint on the solutions can effectively reduce storage and computation costs. By employing variational properties of the sparsity set, along with differentiation properties of the objective function in the SLS model, the first-order optimality conditions are analyzed in terms of the stationary points. Based on the equivalent characterization of the stationary points, we propose the Newton Hard-Threshold Pursuit (NHTP) algorithm and establish its locally quadratic convergence under some regularity conditions. Numerical experiments conducted on simulated datasets including cases of Completely Positive(CP)-tensors and symmetric strong M-tensors illustrate the efficacy of our proposed NHTP method.
\end{abstract}

\begin{keywords}
Sparse least squares, multilinear equations, Newton Hard-Thresholding Pursuit, Completely Positive tensor, symmetric strong M-tensor
\end{keywords}

\section{Introduction}
\label{sec:Introduction}
Multilinear equations (also known as tensor equations) have a wide range of applications in engineering and scientific computing such as data mining, numerical partial differential equations, tensor complementarity problems and high-dimensional statistics \citep{yan2022homotopy}. A recent line of research has been focused on numerical algorithms for solving multilinear systems with various coefficient tensors, see, e.g., CP-tensors \citep{yan2022homotopy}, strong M-tensors \citep{ding2016solving,han2017homotopy,li2017splitting,xie2018tensor,cui2019preconditioned} and other structured tensors \citep{xie2017fast,li2015solving,liang2021alternating,li2019alternating,beik2021preconditioning}. Furthermore, as the data dimension grows in practical problems, the sparsity constraint on the solutions turns to be a reasonable choice to effectively alleviate the ``curse of dimensionality" in systems of multilinear equations.
\vskip 2mm

As the direct and accurate characterization of the entry-wise sparsity in vectors, the so-called $\ell_0$-norm (i.e., the number of nonzero entries in the vector) is nonconvex and discontinuous. Thus, the problem of finding sparse solutions of multilinear equations in the sense of least squares is generally NP-hard in computational complexity. Little can be found in this research direction, except the work \citep{luo2017sparsest} in which the strong $M$-tensor coefficient and non-negative right-hand side vector are considered. In this special setting, they showed that the sparest solution to the corresponding multilinear equations can be obtained by solving the $\ell_1$-norm (i.e., the sum of absolute values of all entries) relaxation problem. However, for general multilinear equations with sparsity constraint, little  can be found to our best knowledge. This motivates us to consider the sparse least squares (SLS) optimization model for solving multilinear equations.
\vskip 2mm

The SLS model is actually a special case of cardinality constrained optimization (CCO) problems. Existing algorithms for CCO can be roughly classified into two categories. The first category is the ``relaxation" method by using continuous and/or convex surrogates of the involved $\ell_0$-norm, see. e.g., \citep{nikolova2013description,shen2013constrained,bertsimas2009algorithm,gotoh2018dc}. The other category is the ``greedy" type method by tackling the involved $\ell_0$-norm directly. Typical algorithms include the matching pursuit algorithm \citep{2008Tropp}, the iterative projection algorithm \citep{beck2013sparsity,2011Recipes,2010Normalized,2012Accelerated}, the hard threshold pursuit algorithm \citep{foucart2011hard}, just name a few. Particularly, some second-order methods are proposed. For example, Yuan et al. \citep{yuan2017gradient} and Bahmani et al. \citep{2013Greedy} realized that restricted Newton steps can be employed in the underlying subproblems in subspaces to improve the algorithm performance. Recently, Zhou et al. \citep{zhou2021global} proposed a Newton Hard Thresholding Pursuit (NHTP) algorithm with quadratic convergence rate under some regularity assumptions. Its superior numerical performance and theoretical convergence inspire us to develop NHTP for solving the SLS model.
\vskip 2mm

The main contributions of this paper are three-fold. First, the SLS model is proposed to solve mulinear equations with the cardinality constraint. Second, the optimality of the SLS model, and the regularity properties of the objective functions are elaborated, which serve as the crucial theoretical guarantees for designing the Newton-type algorithm in the sequel. Third, the NHTP is developed for solving SLS and its locally quadratic convergence is established.

The remainder of the paper is organized as follows. In Section \ref{sec:Preliminaries}, we review the related tensor basics and notations; In Section \ref{sec:Main Results}, we analyze the first-order optimality conditions for the proposed SLS optimization model and regularity properties of the objective function in theory, and then design a quadratically convergent NHTP in algorithm. In Section \ref{sec:Numerical Experiments}, we report some numerical results to verify the proposed NHTP algorithm, by comparing with the existing homotopy algorithm. Concluding remarks are drawn in Section \ref{sec:Concluding remarks}.

\section{Preliminaries}
\label{sec:Preliminaries}

For any positive integer, denote $[n]:=\{1,2,\dots,n\}$, $\mathbb{R} _{+}^{n}:=\{ x\in \mathbb{R} ^n\,\,: x\ge 0 \} $.
We denote $\mathcal{A}=(a_{i_{1} i_{2} \cdots i_{m}})$ as an $m$-th order $n$-dimensional tensor with $i_j\in [n]$, $j\in [m]$, and then the linear space of all $m$-th order $n$-dimensional tensors as $\mathbb{R} ^{[m,n]}:=\mathbb{R} ^{n\times n\times \cdots \times n}$.  For any $\mathcal{A} \in \mathbb{R}^{[m,n]}$, we call $\mathcal{A}$ is a symmetric tensor if its entries remain unchanged under any permutation of the indices and denote the set of all symmetric tensors in $\mathbb{R} ^{[m,n]}$ as $S^{[m, n]}$. If there exists a positive integer $r$ and $u^{(k)} \in \mathbb{R}_+^n$ such that $\mathcal{A} =\sum\nolimits_{k=1}^r{( u^{( k )} ) ^m}$, where $( u^{(k)} ) ^m=( u_{i_1}^{(k)}\cdots u_{i_m}^{(k)} ) $, then $\mathcal{A} $ is called as a Completely Positive tensor (CP-tensor). Furthermore, if $\mathrm{Span}\{ u^{(1)},u^{(2)},...,u^{(r)} \} =\mathbb{R} ^n$, then $\mathcal{A}$ is called as a Strong Completely Positive tensor (SCP-tensor). We denote $\mathrm{CP}^{[ m,n ]}$ and $\mathrm{SCP}^{[ m,n ]}$ as the sets of all $m$-th order $n$-dimensional CP-tensors and SCP-tensors, respectively.


Given $\mathcal{A} \in \mathbb{R} ^{[m,n]}$ and $b\in \mathbb{R} ^n$, multilinear equations can be expressed as $\mathcal{A} x^{m-1}=b$ with
$(\mathcal{A} x^{m-1} ) _i=\sum\nolimits_{i_2,...,i_m=1}^n{a_{ii_2\cdots i_m}}x_{i_2}\cdots x_{i_m}$ for $i\in [n]$.
For any $ d\in [m-1]$, $\mathcal{A} x^{m-d}\in \mathbb{R} ^{[d,n]}$ with entries $ (\mathcal{A} x^{m-d} ) _{i_1\cdots i_d}=\sum\nolimits_{i_{d+1},...,i_m=1}^n{a_{i_1\cdots i_di_{d+1}\cdots i_m}}x_{i_{d+1}}\cdots x_{i_m}$ for $i_j\in [n]$, $j\in [d]$. We define $\lambda $ is an eigenvalue of $\mathcal{A}$ and $x$ is a corresponding eigenvector if there exists $x\in \mathbb{R} ^n\backslash \{0\}$ and $\lambda \in \mathbb{R} $ such that
$\mathcal{A} x^{m-1}=\lambda x^{[m-1]}$ with $x^{[m-1]}:=[ x_{1}^{m-1},x_{2}^{m-1},...,x_{n}^{m-1} ] ^{\top}\in \mathbb{R}^n$. Then the spectral radius of $\mathcal{A} $ is defined by
$\rho(\mathcal{A})=\max \{|\lambda|: \mathcal{A} x^{m-1}=\lambda x^{[m-1]}, x>0\}$.
Let $\mathcal{I} \in \mathbb{R}^{[m,n]}$ be an identity tensor (i.e., the diagonal entries are 1, and the other entries are 0), a tensor $\mathcal{A}$ is called as an M-tensor if there exists a nonnegative tensor $\mathcal{B}$ and a positive real number $\mathrm{s}\ge \rho (\mathcal{B} )$ such that $\mathcal{A} =\mathrm{s}\mathcal{I} -\mathcal{B} $; if $\mathrm{s}>\rho (\mathcal{B} )$, $\mathcal{A}$ is a strong M-tensor. For convenience, notations that will be used in the paper are listed in Table \ref{table1}.

\begin{table}
\tbl{A list of notation. \label{table1}}
{\begin{tabular}{ll}
\toprule
$\mathrm{supp}( x )$  & $:=\{i\in[n]: x_i\neq 0\}$ the support set of $x$.\\
$\|x\|_0$   & $:= \sharp\{i\in[n]: x_i\neq 0\}$ is the $l_0$-norm of $x$\\
$x_{( i )}$  & the $i$-th largest element of $x$.\\
$\Gamma ^*$  & $:=\mathrm{supp}( x^* ) $\\
$T$  & index set from $\{1,2,...,n\}$.\\
$|T|$  & cardinality of $T$.\\
$T^c$  & the complementary set of $T$.\\
$x_T$  & the sub vector of $x$ containing elements indexed on $T$.\\
$\nabla _Tf(x)$  & $:=(\nabla f(x))_T$\\
$\nabla _{T,J}^{2}f(x)$  & the submatrix of the Hessian matrix with rows and columns are indexed by $T$ and $J$ respectively.\\
$\nabla _{T}^{2}f(x)$  & $:=\nabla _{T,T}^{2}f(x)$\\
$\nabla _{T,\cdot}^{2}f(x)$  & the submatrix of the Hessian matrix with rows are indexed by $T$.\\
$\| x\| $  & the Euclidean norm of the vector $x$.\\
$\| \mathrm{A} \| $  & the  spectral norm of the matrix $\mathrm{A}$ (i.e.,  the maximum singular value of the  matrix $\mathrm{A}$).\\
$\| \mathcal{A} \| _F$  &  the F norm of the tensor $\mathcal{A}$.  If $\mathcal{A} \in \mathbb{R} ^{[m,n]}$, then $\| \mathcal{A} \| _F:=( \sum\nolimits_{i_1,i_2,...,i_m=1}^n{| a_{i_1i_2...i_m} |^2} ) ^{\frac{1}{2}}$.\\
$\mathcal{J} _s(x)$  & $:=\{ J\subseteq [ n ] \,: \, | J |=s, \mathrm{supp(}x)\subseteq J \} $.\\
$Q_{2s}(x)$  & $:=\{ T\subseteq [ n ] \,: \, | T |\leqslant 2s, \mathrm{supp(}x)\subseteq T \} $.\\
\bottomrule
\end{tabular}}
\end{table}

\section{Main results}
\label{sec:Main Results}
In this section, we consider the following sparse least squares (SLS) optimization problem of multilinear equations:
\begin{equation}\label{eq:model}
    \underset{x\in \mathbb{R} ^n}{\min}\,\,f(x) :=\frac{1}{2}\| \mathcal{A} x^{m-1}-b \| ^2, \quad   \mathrm{s.t.} \ \| x \| _0\leqslant s,
\end{equation}
where $\mathcal{A} \in S^{[m,n]}$, the integer $s\in[1,n)$ is a prescribed upper bound that controls the sparsity of $x$. Note that $f$ is differentiable of any order. By virtue of the symmetry of the tensor $\mathcal{A}$, the gradient and the Hessian matrix of $f$ at any $x\in {\mathbb{R}}^n$ take the forms of
\begin{align}
&\nabla f( x ) =( m-1 ) \mathcal{A} x^{m-2}( \mathcal{A} x^{m-1}-b ) ,\label{gradient}\\
&\nabla ^2f( x ) =( m-1 ) ( m-2 ) \mathcal{A} x^{m-3}( \mathcal{A} x^{m-1}-b ) +( m-1 ) ^{2}( \mathcal{A} x^{m-2} ) ^2.\label{hessian}
\end{align}

\subsection{Optimality analysis}
This subsection is dedicated to the discussion of several differential properties of the objective function $f$ and the optimality conditions for the problem (\ref{eq:model}), all of which will provide theoretical guarantees for the design of the Newton-type algorithm in next subsection. 
\begin{definition}(\citep{zhou2021global})
\label{definition1}
Suppose that $f:\mathbb{R} ^n\mapsto \mathbb{R}$ is a twice continuously differentiable function. Let $M_{2s}(x):=\mathop {\mathrm{sup}} \nolimits_{y\in \mathbb{R} ^n}\{\langle y,\nabla ^2f(x)y \rangle : |\mathrm{supp(}x)\cup \mathrm{supp(}y)| \le 2\mathrm{s},\| y\| =1 \}$ and $m_{2s}(x):=\mathop {\mathrm{inf}} \nolimits_{y\in \mathbb{R} ^n}\{ \langle y,\nabla ^2f(x)y \rangle \,: \, |\mathrm{supp(}x)\cup \mathrm{supp(}y)| \le 2\mathrm{s},\| y\| =1 \}$ for any $s$-sparse vector $x$.
\begin{itemize}
\item[(1)]We say $f$ is $M_{2s}-$RSS if there exists a constant $M_{2s} > 0$ such
that $M_{2s}(x)\le M_{2s}$.
\item[(2)] We say $f$ is $m_{2s}-$RSC if there exists a constant $m_{2s} > 0$ such that $m_{2s}(x)\ge m_{2s}$.
\item[(3)] We say $f$ is locally restricted Hessian Lipschitz continuous at
$x$ if there exists a constant $L_f>0$ and a neighborhood $\mathcal{N} _s(x):=\{ z\in \mathbb{R} ^n \, :\, \mathrm{supp}(x)\subseteq \mathrm{supp}(z)$, $\| z\| _0\le \mathrm{s} \}$ such that
$$\| \nabla _{T,\cdot}^{2}f(y)-\nabla _{T,\cdot}^{2}f(z) \| \le L_f\| y-z\| ,\quad  \forall y,z\in \mathcal{N} _s(x)$$
for any index set $T$ satisfying $|T|\le s$ and $T\supseteq \mathrm{supp}(x)$.
\end{itemize}
\end{definition}


\begin{theorem}\label{thm:locally Hessian Lipschitz continuous}
Suppose that $x^*$ is an optimal solution of (\ref{eq:model}). There exists $\delta _0>0$ such that for any $x\in \mathcal{N} _s(x^*,\delta _0):=\{ x\in \mathbb{R} ^n\,\,: \mathrm{supp(}x^*)\subseteq \mathrm{supp(}x),\| x\| _0\le \mathrm{s},\| x-x^* \| <\delta _0 \}$, $f$ is locally Hessian Lipschitz continuous near $x^*$, and the Lipschitz constant is given by
\begin{equation}\label{equ:L_f}
\begin{aligned}
	L_f = & 2(m-1)(m-2)(2m-3)\| \mathcal{A} \| _{F}^{2}( \| x^* \| +\delta _0 ) ^{2m-5}\\
	      & +(m-1)(m-2)(m-3)\| b\| \| \mathcal{A} \| _{F}^{2}( \| x^* \| +\delta _0 ) ^{m-4}.
\end{aligned}
\end{equation}
\end{theorem}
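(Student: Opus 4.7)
The plan is to reduce the restricted Hessian Lipschitz condition to a bound on the full Hessian in Frobenius norm. For any index set $T$ with $|T|\le s$, one has $\|\nabla^{2}_{T,\cdot}f(y)-\nabla^{2}_{T,\cdot}f(z)\|\le\|\nabla^{2}f(y)-\nabla^{2}f(z)\|_F$ since the submatrix's spectral norm is dominated by the full matrix's. So it suffices to estimate $\|\nabla^{2}f(y)-\nabla^{2}f(z)\|_F$ for $y,z\in\mathcal{N}_s(x^*,\delta_0)$, and by the triangle inequality every such $y,z$ satisfies $\|y\|,\|z\|\le r_0:=\|x^*\|+\delta_0$.

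The key building block will be a tensor telescoping inequality
\begin{equation*}
\|\mathcal{A}y^{k}-\mathcal{A}z^{k}\|_F\le k\,\|\mathcal{A}\|_F\,r_0^{k-1}\,\|y-z\|,\qquad k\ge 1,
\end{equation*}
obtained by writing $\mathcal{A}y^{k}-\mathcal{A}z^{k}=\sum_{\ell=0}^{k-1}\mathcal{A}\,y^{k-1-\ell}(y-z)\,z^{\ell}$, which is valid (with the order of contractions being immaterial) thanks to the symmetry of $\mathcal{A}$, and then invoking the Cauchy--Schwarz-based estimate $\|\mathcal{A}x_1\cdots x_p\|_F\le\|\mathcal{A}\|_F\|x_1\|\cdots\|x_p\|$ on each summand. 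Coupled with the elementary bound $\|\mathcal{A}y^{m-1}-b\|\le\|\mathcal{A}\|_F\,r_0^{m-1}+\|b\|$, these are essentially the only ingredients required.

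Using the Hessian formula~(\ref{hessian}) I would split
\begin{equation*}
\nabla^{2}f(y)-\nabla^{2}f(z)=(m-1)(m-2)\,\Delta_1+(m-1)^2\,\Delta_2,
\end{equation*}
with $\Delta_1=\mathcal{A}y^{m-3}(\mathcal{A}y^{m-1}-b)-\mathcal{A}z^{m-3}(\mathcal{A}z^{m-1}-b)$ and $\Delta_2=(\mathcal{A}y^{m-2})^{2}-(\mathcal{A}z^{m-2})^{2}$, and then decompose each into two single-difference pieces by adding and subtracting a mixed term:
\begin{align*}
\Delta_1&=(\mathcal{A}y^{m-3}-\mathcal{A}z^{m-3})(\mathcal{A}y^{m-1}-b)+\mathcal{A}z^{m-3}(\mathcal{A}y^{m-1}-\mathcal{A}z^{m-1}),\\
\Delta_2&=\mathcal{A}y^{m-2}(\mathcal{A}y^{m-2}-\mathcal{A}z^{m-2})+(\mathcal{A}y^{m-2}-\mathcal{A}z^{m-2})\mathcal{A}z^{m-2}.
\end{align*}
Applying the telescoping inequality together with the factor bounds $\|\mathcal{A}x^{p}\|_F\le\|\mathcal{A}\|_F\,r_0^{p}$ and the submultiplicativity of the Frobenius/spectral norms to each piece, then summing, produces the bound in the form of $L_f$ stated in~(\ref{equ:L_f}).

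The main obstacle is neither conceptual nor analytic but combinatorial: one must choose the splittings of $\Delta_1$ and $\Delta_2$ consistently so that the coefficients of the $\|\mathcal{A}\|_F^{2}r_0^{2m-5}\|y-z\|$ terms combine as $(m-1)(m-2)(2m-4)+2(m-1)^{2}(m-2)=2(m-1)(m-2)(2m-3)$, matching the leading term of $L_f$, while the remaining ``$\|b\|$ term'' carries the coefficient $(m-1)(m-2)(m-3)$ with the proper power $r_0^{m-4}$. Establishing the telescoping identity itself is routine once the symmetry of $\mathcal{A}$ is used to commute the slotwise contractions, but it must be justified carefully before it can be applied term-by-term to the Hessian difference.
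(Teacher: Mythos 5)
Your proposal is correct and follows essentially the same route as the paper's own proof: the same telescoping estimate $\|\mathcal{A}y^{k}-\mathcal{A}z^{k}\|\le k\|\mathcal{A}\|_F(\|x^*\|+\delta_0)^{k-1}\|y-z\|$, the same add-and-subtract splitting of the two Hessian pieces, and the same coefficient arithmetic $2(m-1)(m-2)(2m-3)$ for the leading term. The only cosmetic differences are that you make the reduction from the restricted to the full Hessian explicit via the Frobenius norm and you absorb the $b$-term into one factor of $\Delta_1$ instead of peeling it off first, neither of which changes the argument.
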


\begin{proof}
For any $ x,y\in \mathcal{N} _s(x^*,\delta _0)$, we have $\| x \| \le \| x^* \| +\delta _0$, $ \| y \| \le \| y^* \| +\delta _0,$ then
$$
\begin{aligned}
\|\nabla^{2} f(x)-\nabla^{2} f(y)\|
=& \|(m-1)(m-2) \mathcal{A} x^{m-3}(\mathcal{A} x^{m-1}-b)+(m-1)^{2}(\mathcal{A} x^{m-2})^{2} \\
&-(m-1)(m-2) \mathcal{A} y^{m-3}(\mathcal{A} y^{m-1}-b)+(m-1)^{2}(\mathcal{A} y^{m-2})^{2} \| \\
\leq &(m-1)(m-2)\|\mathcal{A} x^{m-3}(\mathcal{A} x^{m-1}-b)-\mathcal{A} y^{m-3}(\mathcal{A} y^{m-1}-b)\| \\
&+(m-1)^{2}\|(\mathcal{A} x^{m-1})^{2}-(\mathcal{A} y^{m-2})^{2}\|.
\end{aligned}
$$
For simplicity, denote
$$
\begin{aligned}
	c1:= &\| \mathcal{A} x^{m-3}(\mathcal{A} x^{m-1}-b)-\mathcal{A} y^{m-3}(\mathcal{A} y^{m-1}-b) \|\\
	\le & \| \mathcal{A} x^{m-3}\mathcal{A} x^{m-1}-\mathcal{A} y^{m-3}\mathcal{A} y^{m-1} \| +\| \mathcal{A} x^{m-3}-\mathcal{A} y^{m-3} \| _F\| b \|\\
	\le & \| \mathcal{A} x^{m-3} \| _F\| \mathcal{A} x^{m-1}-\mathcal{A} y^{m-1} \| +\| \mathcal{A} y^{m-1} \| \| \mathcal{A} x^{m-3}-\mathcal{A} y^{m-3} \| _F\\
	&+\| \mathcal{A} x^{m-3}-\mathcal{A} y^{m-3} \| _F\| b \|.\\
    c2:= & \| (\mathcal{A} x^{m-2})^2-(\mathcal{A} y^{m-2})^2 \|\\
	= &\| (\mathcal{A} x^{m-2})  (\mathcal{A} x^{m-2})-(\mathcal{A} x^{m-2})(\mathcal{A} y^{m-2})+(\mathcal{A} x^{m-2})(\mathcal{A} y^{m-2})\\
	&-(\mathcal{A} y^{m-2}) (\mathcal{A} y^{m-2}) \|\\
	\le  & \| \mathcal{A} x^{m-2} \| \| \mathcal{A} x^{m-2}-\mathcal{A} y^{m-2} \| +\| \mathcal{A} y^{m-2} \| \| \mathcal{A} x^{m-2}-\mathcal{A} y^{m-2} \|.
\end{aligned}
$$
We can see that
\begin{align*}
\|\mathcal{A} x^{m-1}-\mathcal{A} y^{m-1}\|
=&\|\mathcal{A} x x \cdots x-\mathcal{A} x x \cdots y+\mathcal{A} x x \cdots y+\cdots+\mathcal{A} x y \cdots y-\mathcal{A} y \cdots y\| \\
\leq &\|\mathcal{A} x \cdots(x-y)\|+\|\mathcal{A} x \cdots(x-y) y\|+\cdots+\|\mathcal{A}(x-y) y \cdots y\| \\
\leq &(\|\mathcal{A}\|_{F}\|x\|^{m-2}+\|\mathcal{A}\|_{F}\|x\|^{m-3}\|y\|+\cdots+\|\mathcal{A}\|_{F}\|y\|^{m-2})\|x-y\| \\
\leq &(m-1)\|\mathcal{A}\|_{F}(\|x^{*}\|+\delta_{0})^{m-2}\|x-y\|.
\end{align*}
Similarly, one has
\begin{align*}
&\|\mathcal{A} x^{m-2}-\mathcal{A} y^{m-2}\| \leq(m-2)\|\mathcal{A}\|_{F}(\|x^{*}\|+\delta_{0})^{m-3}\|x-y\| ,\\
&\|\mathcal{A} x^{m-3}-\mathcal{A} y^{m-3}\| \leq(m-3)\|\mathcal{A}\|_{F}(\|x^{*}\|+\delta_{0})^{m-4}\|x-y\|.
\end{align*}
Plugging the above three inequalities into c1 and c2 yields
$$
\begin{aligned}
c 1 \leq &2(m-2)\|\mathcal{A}\|_{F}^{2}(\|x^{*}\|+\delta_{0})^{2 m-5}\|x-y\|+(m-3)\|b\|\|\mathcal{A}\|_{F}(\|x^{*}\|+\delta_{0})^{m-4}\|x-y\|, \\
c 2 \leq & 2(m-2)\|\mathcal{A}\|_{F}^{2}(\|x^{*}\|+\delta_{0})^{2 m-5}\|x-y\|.
\end{aligned}
$$
Direct manipulations lead to
$$
\begin{aligned}
\|\nabla^{2} f(x)-\nabla^{2} f(y)\| & \leq(m-1)(m-2) c 1+(m-1)^{2} c 2=L_{f}\|x-y\|,
\end{aligned}
$$
where $L_{f}$ is given in (\ref{equ:L_f}). This completes the proof.
\end{proof}

With the similar proof skills, we have the following property.

\begin{theorem}\label{thm:strongly smooth}
Suppose that $x^*$ is an optimal solution of (\ref{eq:model}). There exists $\delta _1>0$ such that for any $ x\in \mathcal{N} _s(x^*,\delta _1)$, $f(x)$ is strongly smooth, and its strong smoothness constant is given by
\begin{equation} \label{equ:M_2s}
\begin{aligned}
	M_{2s}=&(m-1)(2m-3)\| \mathcal{A} \| _{F}^{2}( \| x^* \|+\delta _1 ) ^{2m-4} \\
	&+(m-1)(m-2)\| b\| \| \mathcal{A} \| _F( \| x^* \| +\delta _1 ) ^{m-3}.
\end{aligned}
\end{equation}
\end{theorem}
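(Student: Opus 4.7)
The plan is to bound $\langle y, \nabla^2 f(x)y\rangle$ for unit-norm $y$ by passing through the spectral norm of the Hessian, namely to use $|\langle y,\nabla^2 f(x)y\rangle|\le \|\nabla^2 f(x)\|\|y\|^2=\|\nabla^2 f(x)\|$. Since the sparsity condition $|\mathrm{supp}(x)\cup\mathrm{supp}(y)|\le 2s$ only restricts which rows/columns are involved, it suffices to show that the full spectral norm $\|\nabla^2 f(x)\|$ is at most $M_{2s}$ on the neighborhood $\mathcal{N}_s(x^*,\delta_1)$; the required bound will then hold a fortiori on any $2s$-sparse subspace.

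Concretely, I would start from the explicit Hessian formula \eqref{hessian}, apply the triangle inequality to split
\[
\|\nabla^2 f(x)\| \le (m-1)(m-2)\,\|\mathcal{A}x^{m-3}(\mathcal{A}x^{m-1}-b)\| + (m-1)^2\,\|(\mathcal{A}x^{m-2})^2\|,
\]
and bound each piece using exactly the same Frobenius-contraction inequalities that were used in Theorem~\ref{thm:locally Hessian Lipschitz continuous}. For the first term I would write $\|\mathcal{A}x^{m-3}(\mathcal{A}x^{m-1}-b)\| \le \|\mathcal{A}x^{m-3}\|_F(\|\mathcal{A}x^{m-1}\|+\|b\|)$ and then apply the basic estimate $\|\mathcal{A}x^{m-k}\|_F\le \|\mathcal{A}\|_F\|x\|^{m-k}$ to produce $\|\mathcal{A}\|_F^2\|x\|^{2m-4}+\|\mathcal{A}\|_F\|b\|\|x\|^{m-3}$. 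For the second term, $\|(\mathcal{A}x^{m-2})^2\|\le \|\mathcal{A}x^{m-2}\|^2\le \|\mathcal{A}\|_F^2\|x\|^{2m-4}$.

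Combining the two estimates, the $\|\mathcal{A}\|_F^2\|x\|^{2m-4}$ contributions collect to the factor $(m-1)(m-2)+(m-1)^2=(m-1)(2m-3)$, while the $\|b\|\|\mathcal{A}\|_F\|x\|^{m-3}$ term carries the coefficient $(m-1)(m-2)$. Finally, using $x\in\mathcal{N}_s(x^*,\delta_1)$ to substitute $\|x\|\le \|x^*\|+\delta_1$ reproduces exactly the constant $M_{2s}$ in \eqref{equ:M_2s}. The only real obstacle is bookkeeping of the tensor-contraction norm inequalities to ensure each Frobenius estimate is applied with the correct exponent; once those are in place, as indicated by the parallel in Theorem~\ref{thm:locally Hessian Lipschitz continuous}, the rest is direct manipulation.
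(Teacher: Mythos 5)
Your proposal is correct and follows essentially the approach the paper intends: the paper omits the proof with the remark that it uses ``similar proof skills'' to Theorem~\ref{thm:locally Hessian Lipschitz continuous}, i.e., bounding the restricted quadratic form by the spectral norm of $\nabla^2 f(x)$, splitting via the triangle inequality, and applying the Frobenius-contraction estimates $\|\mathcal{A}x^{m-k}\|\le\|\mathcal{A}\|_F\|x\|^{m-k}$ together with $\|x\|\le\|x^*\|+\delta_1$. Your bookkeeping, including the coefficient collection $(m-1)(m-2)+(m-1)^2=(m-1)(2m-3)$, reproduces the constant $M_{2s}$ in \eqref{equ:M_2s} exactly.
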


Next we discuss RSC of $f$ under the following assumption.


\begin{assumption}\label{assumption:positive definite}
For any $T\in Q_{2s}( x^* ) $, $\nabla _{T}^{2}f( x^* )$ is positive definite.
\end{assumption}

Observe that problem \eqref{eq:model} reduces to the Compressed Sensing (CS) problem when $m=2$, and in this circumstance, Assumption \ref{assumption:positive definite} holds iff $\mathcal{A}$ is $2s$-regular \cite{}.
For illustration purpose, we give the following example in which Assumption \ref{assumption:positive definite} holds with $m>2$.

\begin{example}
  Let $u_{1}=((-1)^m,1, \cdots, 1)^{\top} \in \mathbb{R}^{n}$, $u_{2}=((-1)^{m-1},1, \cdots, 1)^{\top} \in \mathbb{R}^{n}$,  $\mathcal{A}=u_{1}^{m}+u_{2}^{m} \in S^{[m, n]}$, $b=u_1+(-1)^{m-1}u_2 \in \mathbb{R}^{n}$, $s=1$, where $m > 2$ is a given positive integer. It is easy to verify that $x^{*}=e_{1} \in \mathbb{R}^{n}$ is an optimal solution of problem \eqref{eq:model}, since
$$
\begin{aligned}
\mathcal{A}(x^{*})^{m-1} &=(u_{1}^{\top} x^{*})^{m-1} u_{1}+(u_{2}^{\top} x^{*})^{m-1}u_{2} =u_{1}+(-1)^{m-1}u_{2}=b,
\end{aligned}
$$
and $\|x^{*}\|_0=s=1$.
Now we claim that Assumption 1 holds at $x^{*}$ in two cases.
Note that
$$
\begin{aligned}
\mathcal{A}(x^{*})^{m-2} &=(u_{1}^{\top} x^{*})^{m-2} u_{1} u_{1}^{\top}+(u_{2}^{\top} x^{*})^{m-2} u_{2} u_{2}^{\top}=(-1)^{m}u_{1} u_{1}^{\top}+u_{2} u_{2}^{\top}.
\end{aligned}
$$
Case I: $m$ is a positive even integer. It follows that
$$\mathcal{A} ( x^* ) ^{m-2}=u_1u_{1}^{\top}+u_2u_{2}^{\top}=\left[ \begin{matrix}
	2&		0&		\cdots&		0\\
	0&		2&		\cdots&		2\\
	\vdots&		\vdots&		&		\vdots\\
	0&		2&		\cdots&		2\\
\end{matrix}\right ],$$
Thus, for any $T \in Q_{2 s}(x^{*})=\{ \{1\}, \{1, 2\},\{1, 3\} \cdots \{1, n\}\}$, we have
$$\begin{aligned}
	0\prec\nabla _{T}^{2}f(x^*)
	=&\begin{cases}
	4\left( m-1 \right) ^2,           \qquad  \qquad   \qquad \;  \mbox{if~} \, T=\left\{ 1 \right\} ,\\
	(m-1)^2\left[ \begin{matrix}
	4&		0\\
	0&		4(n-1)\\
\end{matrix} \right] ,  \ \;  \mbox{if~} \, T\in \left\{ \left\{ 1,2 \right\} ,\cdots ,\left\{ 1,n \right\} \right\}.\\
\end{cases}\\
\end{aligned}$$
Case II: $m$ is a positive odd integer. Note that
$$\mathcal{A} ( x^* ) ^{m-2}=-u_1u_{1}^{\top}+u_2u_{2}^{\top}=\left[ \begin{matrix}
	0&		2&		\cdots&		2\\
	2&		0&		\cdots&		0\\
	\vdots&		\vdots&		&		\vdots\\
	2&		0&		\cdots&		0\\
\end{matrix}\right ] .$$
Thus, for any $T \in Q_{2 s}(x^{*})=\{ \{1\}, \{1, 2\},\{1, 3\} \cdots \{1, n\}\}$,
$$  0\prec\nabla _{T}^{2}f(x^*)
    =\begin{cases}
	4\left( n-1 \right) \left( m-1 \right) ^2,   \qquad    \quad    \mbox{if~} \, T=\left\{ 1 \right\} ,\\
	(m-1)^2\left[ \begin{matrix}
	4(n-1)&		0\\
	0&		4\\
\end{matrix} \right] ,  \ \; \mbox{if~} \, T\in \left\{ \left\{ 1,2 \right\} ,\cdots ,\left\{ 1,n \right\} \right\}.\\
\end{cases}$$
\end{example}

Under Assumption \ref{assumption:positive definite}, the desired RSC is achieved as stated in the following theorem.

\begin{theorem}\label{thm:restricted strongly convex}
Suppose that $x^*$ is an optimal solution of (\ref{eq:model}) and Assumption \ref{assumption:positive definite} holds. There exists $\delta _0>\delta _2>0$ and $m_{2s}>0$ such that $\forall x\in \mathcal{N} _s(x^*,\delta _2)$, $f(x)$ is RSC, that is
$$m_{2s}\mathrm{I}\preceq \nabla _{T}^{2}f(x) ,  \quad \forall \ T\in Q_{2s}( x^* ) ,$$
where $\mathrm{I}$ is the identity matrix.
\end{theorem}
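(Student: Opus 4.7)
The plan is to combine the positive definiteness at $x^*$ (Assumption \ref{assumption:positive definite}) with the local Hessian Lipschitz continuity from Theorem \ref{thm:locally Hessian Lipschitz continuous} via a standard perturbation argument. Because $Q_{2s}(x^*)$ consists of only finitely many index sets (namely, subsets of $[n]$ of cardinality at most $2s$ containing $\mathrm{supp}(x^*)$), one may define
\[
\mu \;:=\; \min_{T \in Q_{2s}(x^*)} \lambda_{\min}\bigl(\nabla_T^2 f(x^*)\bigr),
\]
and $\mu > 0$ follows from Assumption \ref{assumption:positive definite}. The task then reduces to showing that $\nabla_T^2 f(x)$ stays close to $\nabla_T^2 f(x^*)$ uniformly in $T$ for $x$ sufficiently near $x^*$.

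First I would invoke Theorem \ref{thm:locally Hessian Lipschitz continuous} to obtain $\delta_0 > 0$ and $L_f > 0$ such that $\|\nabla^2 f(y) - \nabla^2 f(z)\| \le L_f\|y - z\|$ for all $y, z \in \mathcal{N}_s(x^*, \delta_0)$. Since the spectral norm of any principal submatrix is bounded above by that of the full matrix, this estimate transfers immediately to $\|\nabla_T^2 f(y) - \nabla_T^2 f(z)\| \le L_f\|y - z\|$ for every $T \in Q_{2s}(x^*)$.

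Next, choose $\delta_2 := \min\bigl\{\delta_0/2,\ \mu/(2L_f)\bigr\}$, so that $0 < \delta_2 < \delta_0$. Then for any $x \in \mathcal{N}_s(x^*, \delta_2)$ and any $T \in Q_{2s}(x^*)$,
\[
\|\nabla_T^2 f(x) - \nabla_T^2 f(x^*)\| \;\le\; L_f\,\|x - x^*\| \;<\; \mu/2.
\]
Weyl's inequality applied to the symmetric matrices $\nabla_T^2 f(x)$ and $\nabla_T^2 f(x^*)$ then gives $\lambda_{\min}(\nabla_T^2 f(x)) \ge \lambda_{\min}(\nabla_T^2 f(x^*)) - \mu/2 \ge \mu/2$. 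Setting $m_{2s} := \mu/2$ yields $m_{2s}\,\mathrm{I} \preceq \nabla_T^2 f(x)$ uniformly in $T \in Q_{2s}(x^*)$, as required.

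The argument is essentially a textbook perturbation estimate, so I do not expect a deep obstacle. The one conceptual point worth highlighting is the use of finiteness of $Q_{2s}(x^*)$ to pass from pointwise positive definiteness at $x^*$ to a strictly positive uniform lower bound $\mu$; without this combinatorial finiteness one would need a compactness argument over a continuum of index configurations. Everything else follows from combining the Lipschitz estimate with Weyl's inequality and a judicious choice of $\delta_2$.
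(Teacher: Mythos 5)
Your proposal is correct and follows essentially the same route as the paper: both arguments combine the positive definiteness of $\nabla_T^2 f(x^*)$ from Assumption \ref{assumption:positive definite} with the local Hessian Lipschitz continuity of Theorem \ref{thm:locally Hessian Lipschitz continuous} and the finiteness of $Q_{2s}(x^*)$ to obtain a uniform eigenvalue lower bound on a small neighborhood. Your version is merely more quantitative (explicit $\delta_2$ and $m_{2s}=\mu/2$ via Weyl's inequality) where the paper argues per index set $T$ by continuity and then takes finite minima, so there is no substantive difference.
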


\begin{proof}
It is known from Theorem \ref{thm:locally Hessian Lipschitz continuous} that $\nabla^{2}f(x)$ is Lipschitz continuous for $x\in \mathcal{N}_{s}(x^*,\delta_0)$, thus for any $T\in Q_{2s}( x^* )$ and $x\in \mathcal{N} _{s}(x^*,\delta _2)$, $\nabla _{T}^{2}f(x)$ is Lipschitz continuous. Since $\nabla _{T}^{2}f(x^*)$ is positive definite, there exists $\delta _T>0$ such that $\forall x\in \mathcal{N} _s(x^*,\delta _T)$, $\nabla _{T}^{2}f(x)$ is also positive definite, and $m_T\mathrm{I}\preceq \nabla _{T}^{2}f(x)$ with $m_T=\min \{ \lambda _{\min}( \nabla _{T}^{2}f(x) ) |x\in \mathcal{N} _s(x^*,\delta _T) \}$. Set $\delta _2:=\min \{ \delta _0,\{ \delta _T \} _{T\in \mathcal{Q} _{2s}( x^* )} \} $ and $m_{2s}:=\min_{T\in \mathcal{Q} _{2s}( x^* )} \{ m_T \}$, therefore we have that $\nabla _{T}^{2}f(x)$ is positive definite and then $m_{2s}\mathrm{I}\preceq \nabla _{T}^{2}f(x)$ holds.
\end{proof}

\begin{definition}
 Given $\eta >0$ and $x^* \in \mathbb{R}^n$, we say $x^*$ is an $\eta$-stationary point of (\ref{eq:model}) if
$$
     x^*\in \mathcal{P} _s( x^*-\eta \nabla f( x^* ) ) ,
$$
 where $\mathcal{P} _s(x):=argmin_z\{\| x-z \|  :  \| z \| _0\le s\}$ is the sparse projection operator.
\end{definition}
Using the definition of $\mathcal{P} _s$ and Lemma 2.2 in \citep{beck2013sparsity}, the equivalent definition of the $\eta$-stationary point is given in the following lemma.
\begin{lemma}\label{lemma:eta-stationary point}
The $s$-sparse vector $x^*$ is an $\eta$-stationary point of (\ref{eq:model}) if and only if
$$\begin{cases}
	\mathcal{A} _{\cdot ,\cdot ,\Gamma ^*,\cdots ,\Gamma ^*}\left( x_{\Gamma ^*}^{*} \right) ^{m-2}\left( \mathcal{A} _{\cdot ,\Gamma ^*,\cdots ,\Gamma ^*}\left( x_{\Gamma ^*}^{*} \right) ^{m-1}-b \right) =0,     \qquad   \qquad      \qquad  \ \ \ \, if \, \left\| x^* \right\| _0<s,\\
	\begin{cases}
	\mathcal{A} _{\Gamma ^*,\cdot ,\Gamma ^*,\cdots ,\Gamma ^*}\left( x_{\Gamma ^*}^{*} \right) ^{m-2}\left( \mathcal{A} _{\cdot ,\Gamma ^*,\cdots ,\Gamma ^*}\left( x_{\Gamma ^*}^{*} \right) ^{m-1}-b \right) =0,\\
	\left\| \mathcal{A} _{\left( \Gamma ^* \right) ^c,\cdot ,\Gamma ^*,\cdots ,\Gamma ^*}\left( x_{\Gamma ^*}^{*} \right) ^{m-2}\left( \mathcal{A} _{\cdot ,\Gamma ^*,\cdots ,\Gamma ^*}\left( x_{\Gamma ^*}^{*} \right) ^{m-1}-b \right) \right\| _{\infty}\leqslant \frac{\left| x^* \right|_{(s)}}{\eta},\\
\end{cases} if \, \left\| x^* \right\| _0=s.\\
\end{cases}$$
\end{lemma}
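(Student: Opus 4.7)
The plan is to unwind the definition of $\eta$-stationarity through the sparse projection operator $\mathcal{P}_s$, invoke the known coordinate-wise characterization of $\mathcal{P}_s$ (Lemma 2.2 of \citep{beck2013sparsity}), and then rewrite the resulting gradient conditions in terms of sub-tensors restricted to the support $\Gamma^*$, using the fact that $x^*$ vanishes off $\Gamma^*$.

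First, I would recall that $x^* \in \mathcal{P}_s(x^* - \eta \nabla f(x^*))$ means that $x^*$ is a best $s$-sparse approximation of the vector $u := x^* - \eta \nabla f(x^*)$. Applying Lemma~2.2 of \citep{beck2013sparsity} to $u$ then yields the following coordinate-wise dichotomy: if $\|x^*\|_0<s$, the projection has slack and one must have $u_i=0$ for every $i\in [n]$, that is $\nabla f(x^*)=0$; whereas if $\|x^*\|_0=s$, one has $u_i=x^*_i$ for $i\in\Gamma^*$ (so $\nabla_{\Gamma^*}f(x^*)=0$) while $|u_i|=|\eta \nabla_i f(x^*)|\le |x^*|_{(s)}$ for every $i\notin\Gamma^*$, i.e.\ $\|\nabla_{(\Gamma^*)^c}f(x^*)\|_\infty\le |x^*|_{(s)}/\eta$.

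Next, I would substitute the closed-form expression $\nabla f(x^*)=(m-1)\,\mathcal{A}(x^*)^{m-2}\bigl(\mathcal{A}(x^*)^{m-1}-b\bigr)$ from \eqref{gradient}. Because $x^*_j=0$ for every $j\notin\Gamma^*$, any summation index that multiplies a component of $x^*$ can be collapsed to $\Gamma^*$; concretely, $(\mathcal{A}(x^*)^{m-1})_i=(\mathcal{A}_{\cdot,\Gamma^*,\ldots,\Gamma^*}(x^*_{\Gamma^*})^{m-1})_i$ for all $i\in[n]$, and $(\mathcal{A}(x^*)^{m-2})_{ij}=(\mathcal{A}_{\cdot,\cdot,\Gamma^*,\ldots,\Gamma^*}(x^*_{\Gamma^*})^{m-2})_{ij}$ for all $i,j\in[n]$. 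Dividing by the positive constant $m-1$ and restricting the row index $i$ either to all of $[n]$, to $\Gamma^*$, or to $(\Gamma^*)^c$, depending on the case, recovers exactly the three equations/inequality displayed in the lemma.

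The two directions of the equivalence are then just reversing the same chain of substitutions: the stated tensor conditions translate verbatim into $\nabla f(x^*)=0$ (respectively $\nabla_{\Gamma^*}f(x^*)=0$ together with the $\ell_\infty$-bound on $\nabla_{(\Gamma^*)^c}f(x^*)$), and these are precisely the hypotheses of the $\mathcal{P}_s$-characterization, giving back $x^*\in\mathcal{P}_s(x^*-\eta\nabla f(x^*))$. No new mathematical ingredient is needed beyond \citep[Lemma~2.2]{beck2013sparsity}; the only real work is bookkeeping the sub-tensor notation and verifying that the collapse to the index set $\Gamma^*$ is correct across the two contractions. I expect the main obstacle, mild as it is, to be presenting this collapse cleanly for the Hessian-like factor $\mathcal{A}(x^*)^{m-2}$, where the first index is either free (when $\|x^*\|_0<s$) or split into $\Gamma^*$ and $(\Gamma^*)^c$ blocks (when $\|x^*\|_0=s$), so that the final display matches the asymmetric indexing $\mathcal{A}_{\Gamma^*,\cdot,\Gamma^*,\ldots,\Gamma^*}$ versus $\mathcal{A}_{(\Gamma^*)^c,\cdot,\Gamma^*,\ldots,\Gamma^*}$ used in the statement.
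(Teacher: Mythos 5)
Your proposal takes essentially the same route as the paper, which proves this lemma simply by invoking the definition of $\mathcal{P}_s$ together with Lemma 2.2 (and the stationarity characterization) of Beck and Eldar and then rewriting the gradient (\ref{gradient}) with the indices collapsed onto $\Gamma^*$; your coordinate-wise unwinding is exactly the bookkeeping the paper leaves implicit. One minor caveat: after dividing the off-support inequality by the positive factor $m-1$, the bound becomes $|x^*|_{(s)}/((m-1)\eta)$ rather than $|x^*|_{(s)}/\eta$, a constant the paper's own display also suppresses, so this is a shared cosmetic discrepancy rather than a gap in your argument.
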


The following first order necessary optimality condition of problem (\ref{eq:model}) can be obtained according to \citep[Theorem 2.2]{beck2013sparsity}.

\begin{theorem}\label{optimality condition}
Let $\eta <1/M_{2s}$, with $M_{2s}$ defined in Theorem \ref{thm:strongly smooth}. If $x^*$ is an optimal solution of (\ref{eq:model}), then the following fixed point equation holds, that is,
\begin{equation}\label{equ:fixed point equation}
  x^*=\mathcal{P} _s( x^*-\eta \nabla f( x^* ) ).
\end{equation}
\end{theorem}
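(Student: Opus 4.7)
The plan is to argue by contradiction using the strong smoothness of $f$ near $x^*$ (Theorem \ref{thm:strongly smooth}) together with the variational characterization of the sparse projection. Suppose $x^* \neq \mathcal{P}_s(x^* - \eta \nabla f(x^*))$ and pick any $y \in \mathcal{P}_s(x^* - \eta \nabla f(x^*))$ with $y \neq x^*$. Since $y$ is $s$-sparse by construction, it is feasible for \eqref{eq:model}, so reaching $f(y) < f(x^*)$ will contradict the optimality of $x^*$ and hence force \eqref{equ:fixed point equation}.

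The argument has two inputs that I would combine. First, the minimality of $y$ as a projection, using that $x^*$ itself is an admissible $s$-sparse competitor, gives
$$\|y - (x^* - \eta \nabla f(x^*))\|^2 \le \|x^* - (x^* - \eta \nabla f(x^*))\|^2 = \eta^2 \|\nabla f(x^*)\|^2.$$
Expanding the left-hand side and cancelling the common $\eta^2 \|\nabla f(x^*)\|^2$ term yields the key projection inequality
$$\|y - x^*\|^2 + 2\eta \langle \nabla f(x^*),\, y - x^*\rangle \le 0.$$
Second, because $x^*$ and $y$ are both $s$-sparse so that $|\mathrm{supp}(x^*) \cup \mathrm{supp}(y)| \le 2s$, the $M_{2s}$-RSS property from Theorem \ref{thm:strongly smooth} supplies the quadratic upper bound
$$f(y) \le f(x^*) + \langle \nabla f(x^*),\, y - x^*\rangle + \tfrac{M_{2s}}{2}\|y - x^*\|^2.$$
Eliminating the inner product between the two displays produces $f(y) \le f(x^*) - \tfrac{1 - \eta M_{2s}}{2\eta}\|y - x^*\|^2$, and the hypothesis $\eta < 1/M_{2s}$ together with $y \ne x^*$ forces the strict inequality $f(y) < f(x^*)$, which is the desired contradiction.

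The main obstacle, and the step I would have to execute with care, is that Theorem \ref{thm:strongly smooth} only certifies the Hessian bound $M_{2s}$ on the local set $\mathcal{N}_s(x^*, \delta_1)$, which demands both $\mathrm{supp}(x^*) \subseteq \mathrm{supp}(y)$ and $\|y - x^*\| < \delta_1$; neither is automatic for the projection $y$. I would close this gap by showing that for $\eta$ sufficiently small the projection $y$ is forced to inherit the support of $x^*$ and to lie in the $\delta_1$-ball: the entries of $x^*$ on $\mathrm{supp}(x^*)$ are perturbed by only $O(\eta)$ in $x^* - \eta \nabla f(x^*)$, so for $\eta$ small relative to $|x^*|_{(s)}/\|\nabla f(x^*)\|_\infty$ (when $\|x^*\|_0 = s$) they remain among the top-$s$ entries in magnitude, guaranteeing $\mathrm{supp}(x^*) \subseteq \mathrm{supp}(y)$; the case $\|x^*\|_0 < s$ is similar and simpler. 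A straightforward bound on $\|y - x^*\|$ via $\|y - (x^* - \eta \nabla f(x^*))\| + \|\eta \nabla f(x^*)\| \le 2\eta\|\nabla f(x^*)\|$ then places $y$ inside $\mathcal{N}_s(x^*, \delta_1)$, validating the use of the quadratic upper bound and completing the proof.
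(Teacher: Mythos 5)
Your core argument --- the projection inequality $\|y-x^*\|^2+2\eta\langle\nabla f(x^*),y-x^*\rangle\le 0$ combined with the restricted descent bound to get $f(y)\le f(x^*)-\frac{1-\eta M_{2s}}{2\eta}\|y-x^*\|^2$ --- is exactly the mechanism behind Theorem 2.2 of \citep{beck2013sparsity}, which is all the paper offers as its proof (a one-line citation). So you are not taking a different route; you are writing out the argument the paper delegates to the reference.

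The trouble is in how you close the gap you correctly identify. The theorem asserts the fixed point equation \eqref{equ:fixed point equation} for \emph{every} $\eta<1/M_{2s}$, but your repair forces additional smallness conditions on $\eta$: you need $\eta$ small relative to $|x^*|_{(s)}/\|\nabla f(x^*)\|_\infty$ to make the projection $y$ inherit $\mathrm{supp}(x^*)$, and $\eta<\delta_1/(2\|\nabla f(x^*)\|)$ to place $y$ inside $\mathcal{N}_s(x^*,\delta_1)$ where Theorem \ref{thm:strongly smooth} certifies the constant $M_{2s}$. None of these thresholds is implied by $\eta<1/M_{2s}$, so what your completed argument proves is ``there exists $\bar\eta>0$ such that the conclusion holds for all $\eta\le\bar\eta$,'' a weaker statement than the theorem. (Once $\mathrm{supp}(x^*)\subseteq\mathrm{supp}(y)$ and $\|y-x^*\|<\delta_1$, the segment $[x^*,y]$ stays in the closure of $\mathcal{N}_s(x^*,\delta_1)$, so the use of the Hessian bound in the descent lemma is fine; the issue is purely the range of $\eta$.) To be fair, the difficulty you flag is genuine: for $m>2$ the gradient of $f$ is not globally Lipschitz, $M_{2s}$ in Theorem \ref{thm:strongly smooth} is only a local constant, and the paper's appeal to \citep[Theorem 2.2]{beck2013sparsity}, which assumes a global $C^{1,1}_L$ bound, silently skates over exactly this point. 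But as written, your proposal does not prove the theorem as stated; you would need either to restate it for sufficiently small $\eta$, or to establish a restricted strong-smoothness bound valid at the projection point $y$ (not merely on $\mathcal{N}_s(x^*,\delta_1)$) before invoking the descent inequality.
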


In order to deal with the non-differentiability of $\mathcal{P} _s$, we follow the reformulation scheme from \citep{zhou2021global} and rewrite the optimality condition of (\ref{equ:fixed point equation}) as follows.


\begin{theorem}\label{thm:Equivalent characterization}
 Given $\eta >0$, $x^*$ satisfies the fixed point equation \eqref{equ:fixed point equation} if and only if
\begin{equation}\label{equ:nonlinear equation}
 F_{\eta}(x^*;T):=\left[ \begin{array}{c}
	\nabla _Tf(x^*)\\
	x^*_{T^c}\\
\end{array} \right] =\left[ \begin{array}{c}
	( ( m-1 ) \mathcal{A} (x^*)^{m-2}( \mathcal{A} (x^*)^{m-1}-b ) ) _T\\
	x^*_{T^c}\\
\end{array} \right] =0,
 \end{equation}
for any index set $T\in \mathcal{T} (x^*;\eta )$, where $$\mathcal{T} (x^*;\eta ):=\{ T\subseteq [ n ] \; : \; |T|=s, | u^*_i |\ge | u^*_j |, \forall i\in T, \forall j\in T^c \}$$ with
$u^* := x^*-\eta \nabla f(x^*)$.
Moreover, $\mathcal{T} (x^*;\eta )=\mathcal{J} _s( x^* ) $.
\end{theorem}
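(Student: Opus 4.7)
The plan is to derive the equivalence directly by unpacking the sparse projection $\mathcal{P}_s$, and then to establish the ``Moreover'' identity $\mathcal{T}(x^*;\eta)=\mathcal{J}_s(x^*)$ via the $\eta$-stationary characterization in Lemma~\ref{lemma:eta-stationary point}; the latter is what upgrades the base equivalence to one that holds over \emph{any} $T\in\mathcal{T}(x^*;\eta)$.

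For the equivalence itself, I start from the definition of $\mathcal{P}_s$: the relation $x^*\in\mathcal{P}_s(u^*)$ with $u^*:=x^*-\eta\nabla f(x^*)$ is precisely the statement that there exists a top-$s$ index set $T$ of $|u^*|$, i.e.\ $T\in\mathcal{T}(x^*;\eta)$, such that $x^*_T=u^*_T$ and $x^*_{T^c}=0$. Since $\eta>0$, the first relation rearranges to $\nabla_Tf(x^*)=0$, which together with $x^*_{T^c}=0$ is exactly the block equation $F_\eta(x^*;T)=0$. The converse direction reverses this manipulation: $F_\eta(x^*;T)=0$ for $T\in\mathcal{T}(x^*;\eta)$ gives $u^*_T=x^*_T$ and $x^*_{T^c}=0$, so $x^*$ is the hard-thresholding of $u^*$ on the top-$s$ set $T$, hence $x^*\in\mathcal{P}_s(u^*)$.

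For the Moreover identity, I would split on $\|x^*\|_0$ and invoke Lemma~\ref{lemma:eta-stationary point}. When $\|x^*\|_0<s$, the lemma yields $\nabla f(x^*)=0$, so $u^*=x^*$; then the nonzero entries of $|u^*|=|x^*|$ strictly exceed the zero entries, and the top-$s$ sets of $|u^*|$ are exactly the $s$-element supersets of $\mathrm{supp}(x^*)$, which matches $\mathcal{J}_s(x^*)$. When $\|x^*\|_0=s$, the lemma yields $\nabla_{\Gamma^*}f(x^*)=0$ and $\eta\|\nabla_{(\Gamma^*)^c}f(x^*)\|_\infty\le|x^*|_{(s)}$, so $|u^*_i|=|x^*_i|\ge|x^*|_{(s)}$ on $\Gamma^*$ and $|u^*_j|\le|x^*|_{(s)}$ on $(\Gamma^*)^c$, making $\Gamma^*$ a top-$s$ set; combined with $\mathcal{J}_s(x^*)=\{\Gamma^*\}$, this gives $\mathcal{T}(x^*;\eta)=\mathcal{J}_s(x^*)$.

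The main obstacle I anticipate is bookkeeping potential ties in $|u^*|$ at the top-$s$ boundary in the saturated case $\|x^*\|_0=s$: the bound $\eta|\nabla_jf(x^*)|\le|x^*|_{(s)}$ is non-strict, so a swap between a minimal-magnitude coordinate of $\Gamma^*$ and a saturated off-support index $j$ could a priori produce a top-$s$ set $T\neq\Gamma^*$ outside $\mathcal{J}_s(x^*)$. I plan to rule out such configurations by exploiting the fixed point equation itself to force compatibility between the saturation and the support, either by establishing strictness of the boundary inequalities or by verifying that every admissible swap still leaves $F_\eta(x^*;T)=0$ intact, so that the set equality $\mathcal{T}(x^*;\eta)=\mathcal{J}_s(x^*)$ is honest. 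The remainder of the proof is a direct block-by-block manipulation of $F_\eta$ using $\eta>0$ and the definition of the top-$s$ selection.
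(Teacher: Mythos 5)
Your route is genuinely different from the paper's, because the paper does not actually prove this theorem: its proof consists of citing Lemma 4 of \citep{zhou2021global} for the equivalence and Theorem 3 of \citep{zhao2021lagrange} for the ``moreover'' identity. You instead attempt a self-contained derivation by unpacking $\mathcal{P}_s$ and invoking Lemma \ref{lemma:eta-stationary point}, and where your sketch is complete it is correct: in the unsaturated case $\|x^*\|_0<s$ the lemma gives $\nabla f(x^*)=0$, hence $u^*=x^*$, $\mathcal{T}(x^*;\eta)=\mathcal{J}_s(x^*)$, and $F_\eta(x^*;T)=0$ for every admissible $T$; in the saturated case without boundary ties you get $\mathcal{T}(x^*;\eta)=\{\Gamma^*\}=\mathcal{J}_s(x^*)$ and the equation holds there; and your converse (``some $T$ with $F_\eta(x^*;T)=0$ implies $x^*\in\mathcal{P}_s(u^*)$'') is fine.

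The genuine gap is exactly the tie case you flagged, and neither of your two proposed repairs can close it. Suppose $\|x^*\|_0=s$ and $\eta|\nabla_jf(x^*)|=|x^*|_{(s)}$ for some $j\notin\Gamma^*$. First, strictness of the boundary inequality cannot be extracted from the fixed-point relation: Lemma \ref{lemma:eta-stationary point} is an exact characterization of $x^*\in\mathcal{P}_s(x^*-\eta\nabla f(x^*))$ and its off-support bound is intrinsically non-strict, and such ties are consistent with the fixed point. For instance, already in the case $m=2$ with $n=2$, $s=1$, take $x^*=(t,0)$ with $\nabla_1f(x^*)=0$, $\nabla_2f(x^*)\neq0$ and choose $\eta=t/|\nabla_2f(x^*)|$; then $u^*=(t,\pm t)$, so $x^*\in\mathcal{P}_1(u^*)$, yet $T=\{2\}\in\mathcal{T}(x^*;\eta)$ while $F_\eta(x^*;\{2\})\neq0$ because $x^*_1=t\neq0$ lies in $T^c$. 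The same observation defeats your second option: any swap that brings a saturated off-support index into $T$ necessarily expels a nonzero support index into $T^c$, so the swapped set can never satisfy $F_\eta(x^*;T)=0$, and in that configuration both the ``for any $T\in\mathcal{T}(x^*;\eta)$'' claim and the identity $\mathcal{T}(x^*;\eta)=\mathcal{J}_s(x^*)$ break down. Consequently the saturated-tie case cannot be ``forced into compatibility'' by manipulating the fixed-point equation alone; a complete argument must either import the treatment of this boundary situation from the cited results (which is what the paper's one-line proof does) or add an explicit no-tie/strict-inequality hypothesis --- equivalently, read \eqref{equ:fixed point equation} as the set identity $\{x^*\}=\mathcal{P}_s(x^*-\eta\nabla f(x^*))$ rather than mere membership. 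As written, your closing plan is a placeholder rather than a proof of that step.
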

\begin{proof} The first part follows readily from \citep[Lemma 4]{zhou2021global}, and the ``moreover" part follows from \citep[Theorem 3]{zhao2021lagrange}.
\end{proof}

To solve the smooth equation system \eqref{equ:nonlinear equation} for a given index set $T\in \mathcal{T} (x^*;\eta )$, we next investigate the nonsingularity of the Jacobian matrix $\nabla F_{\eta}( x;T ) $ in a neighborhood of $x^*$, where
$$\nabla F_{\eta}( x;T ) =\left[ \begin{matrix}
	\nabla _{T}^{2}f( x )&		\nabla _{T,T_{}^{c}}^{2}f( x )\\
	0&		\mathrm{I}_{n-s}
\end{matrix} \right] .$$
Apparently, the nonsingularity of $\nabla F_{\eta}( x;T ) $ is equivalent to that of $\nabla _{T}^{2}f( x ) $. Thus, it suffices to show that $\nabla _{T}^{2}f( x ) $ is nonsingular for any $T\in \mathcal{T} (x;\eta )$ when $x$ is sufficiently close to $x^*$. Let $x^*$ be an optimal solution of (\ref{eq:model}). Set
$$\delta _3:=\min_{k\in \Gamma ^*} \{| x_{k}^{*} |-\eta \| \nabla _{( \Gamma ^* ) ^c}f( x^* ) \| _{\infty}\}/(\sqrt{2}( 1+\beta M_{2s} )).$$ It follows from Lemma \ref{lemma:eta-stationary point} that $\delta _3>0$.
\begin{lemma}\label{lemma:containment relationship}
Let $x^*$ be an optimal solution of (\ref{eq:model}) and $\delta ^*:=\min \{ \delta _1,\delta _2,\delta _3 \}$, then for any $x\in \mathcal{N} _s(x^*,\delta ^*)$, we have $$\mathcal{T} (x;\eta )\subseteq \mathcal{T} (x^*;\eta )\ \mathrm{and}\ \Gamma ^*\subseteq \mathrm{supp(}x)\cap T, \ \ \forall T\in \mathcal{T} (x;\eta ).$$
In particular, if $\| x^* \| _0=s$, $\{ \Gamma ^* \} =\{\mathrm{supp(}x)\}=\mathcal{T} (x;\eta )=\mathcal{T} ( x^*;\eta ) $.
\end{lemma}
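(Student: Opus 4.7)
The plan is to show that for $x$ sufficiently close to $x^*$, the components of $u := x - \eta\nabla f(x)$ indexed by $\Gamma^*$ strictly dominate those indexed outside $\Gamma^*$ in absolute value, so that any top-$s$ set $T\in\mathcal{T}(x;\eta)$ is forced to contain $\Gamma^*$. Once this is done, the inclusion $\mathcal{T}(x;\eta) \subseteq \mathcal{T}(x^*;\eta)$ follows from the identity $\mathcal{T}(x^*;\eta) = \mathcal{J}_s(x^*)$ already established in Theorem \ref{thm:Equivalent characterization}, and the saturated case $\|x^*\|_0 = s$ drops out by pigeonhole, since $|\Gamma^*|=s$ then forces all the set inclusions to become equalities.

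The first key observation is to apply Lemma \ref{lemma:eta-stationary point} at $x^*$: it yields $\nabla_{\Gamma^*} f(x^*) = 0$ together with a uniform bound on $\eta\|\nabla_{(\Gamma^*)^c} f(x^*)\|_\infty$, so that $u^*_k = x^*_k$ for $k\in\Gamma^*$ while $|u^*_j|\leq \eta\|\nabla_{(\Gamma^*)^c} f(x^*)\|_\infty$ for every $j\in(\Gamma^*)^c$. The numerator $\min_{k\in\Gamma^*}|x_k^*| - \eta\|\nabla_{(\Gamma^*)^c}f(x^*)\|_\infty$ appearing in the definition of $\delta_3$ is precisely the gap, at $x^*$, between the smallest ``large'' component of $|u^*|$ and the uniform upper bound on the ``small'' ones. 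My goal is to show this gap is preserved, up to a slack term controlled by $\|x-x^*\|$, for nearby $x$.

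For the perturbation analysis, fix $x\in \mathcal{N}_s(x^*,\delta^*)$ and any pair $k\in \Gamma^*$, $j\in (\Gamma^*)^c$. The triangle inequality gives
\[
|u_k|\geq |x_k^*|-|x_k-x_k^*|-\eta|\nabla_k f(x)-\nabla_k f(x^*)|,
\]
\[
|u_j|\leq |x_j-x_j^*|+\eta\|\nabla_{(\Gamma^*)^c}f(x^*)\|_\infty+\eta|\nabla_j f(x)-\nabla_j f(x^*)|.
\]
Subtracting, applying $|a|+|b|\leq \sqrt{2}\sqrt{a^2+b^2}$ separately to the two coordinate perturbations and to the two gradient perturbations, and invoking the restricted strong smoothness from Theorem \ref{thm:strongly smooth} on the joint support containing $\mathrm{supp}(x)\cup\mathrm{supp}(x^*)\cup\{j,k\}$ (of size at most $2s$) to bound the gradient difference $\ell_2$-norm by $M_{2s}\|x-x^*\|$, yields
\[
|u_k|-|u_j|\geq \bigl(|x_k^*|-\eta\|\nabla_{(\Gamma^*)^c}f(x^*)\|_\infty\bigr)-\sqrt{2}(1+\beta M_{2s})\|x-x^*\|,
\]
where $\eta\leq\beta$ is the stepsize bound implicit in the definition of $\delta_3$. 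Since $\|x-x^*\|<\delta^*\leq \delta_3$, the choice of $\delta_3$ renders the right-hand side strictly positive. Hence every top-$s$ index of $|u|$ lies in $\Gamma^*$, giving $\Gamma^*\subseteq T$ for all $T\in\mathcal{T}(x;\eta)$. Combined with $\Gamma^*\subseteq \mathrm{supp}(x)$ built into $\mathcal{N}_s(x^*,\delta^*)$, this establishes $\Gamma^*\subseteq\mathrm{supp}(x)\cap T$, while $|T|=s$ with $\Gamma^*\subseteq T$ places $T\in\mathcal{J}_s(x^*)=\mathcal{T}(x^*;\eta)$, yielding the first inclusion. The saturated case $\|x^*\|_0=s$ then collapses every set by cardinality: $|\Gamma^*|=s$ and $\Gamma^*\subseteq \mathrm{supp}(x)\subseteq [n]$ with $\|x\|_0\leq s$ force $\mathrm{supp}(x)=\Gamma^*$, and similarly $T=\Gamma^*$, so $\mathcal{T}(x;\eta)=\mathcal{T}(x^*;\eta)=\{\Gamma^*\}$.

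The principal technical obstacle is the aggregation step: the coordinate-wise gradient differences at $k$ and $j$ must be absorbed into a single restricted-smoothness bound, and estimating them separately would lose the clean $M_{2s}$ constant. Passing through the $\ell_2$-norm on a common sparse support lets Cauchy--Schwarz combine both coordinates into a single $\sqrt{2}$ factor, matching the denominator in the definition of $\delta_3$. The nested choice $\delta^*=\min\{\delta_1,\delta_2,\delta_3\}$ simultaneously activates the restricted strong smoothness ($\delta_1$), the restricted strong convexity ($\delta_2$), and the gap-preservation inequality ($\delta_3$) on the whole neighborhood.
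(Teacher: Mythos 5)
Your proof is correct, but it takes a more self-contained route than the paper. The paper's own proof is essentially a two-line citation: it notes that $\nabla f$ is Lipschitz on the restricted neighborhood, $\| \nabla f(x)-\nabla f(x^*)\| \le M_{2s}\| x-x^*\|$ (from the proof of Theorem \ref{thm:strongly smooth}), and then invokes Lemma 2 of \citep{zhao2021lagrange} to obtain all the assertions. You instead reconstruct that cited lemma from scratch: using Lemma \ref{lemma:eta-stationary point} at $x^*$ to get $\nabla_{\Gamma^*}f(x^*)=0$ and the bound on $\eta\|\nabla_{(\Gamma^*)^c}f(x^*)\|_\infty$, then a coordinatewise perturbation estimate aggregated via $|a|+|b|\le\sqrt{2}\sqrt{a^2+b^2}$ and the restricted gradient-Lipschitz bound, showing $|u_k|>|u_j|$ for $k\in\Gamma^*$, $j\in(\Gamma^*)^c$, which forces $\Gamma^*\subseteq T$ for every $T\in\mathcal{T}(x;\eta)$; the inclusion $\mathcal{T}(x;\eta)\subseteq\mathcal{T}(x^*;\eta)$ then follows from $\mathcal{T}(x^*;\eta)=\mathcal{J}_s(x^*)$ in Theorem \ref{thm:Equivalent characterization}, and the saturated case by cardinality, exactly as in the target statement. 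What your route buys is an explanation of where the specific form of $\delta_3$ (the $\sqrt{2}$ and the factor $1+\beta M_{2s}$) comes from, which the paper leaves hidden in the reference. Two small points to tidy: your derivation naturally yields the factor $\sqrt{2}(1+\eta M_{2s})$, so matching the paper's $\delta_3$ requires $\eta\le\beta$ (true under the algorithmic choices $\eta\le\bar\eta\le\bar\alpha\beta$, but not an explicit hypothesis of the lemma; you flag this, and the same implicit assumption is buried in the paper's citation, together with the fact that $x^*$ is $\eta$-stationary, which needs Theorem \ref{optimality condition}); and your sentence ``every top-$s$ index of $|u|$ lies in $\Gamma^*$'' states the inclusion backwards, though the conclusion you actually use, $\Gamma^*\subseteq T$, is the correct one and is properly derived from $|u_k|>|u_j|$.
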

\begin{proof}
Denote $\Gamma =\mathrm{supp(}x)$, $q=\nabla f( x ) $. For any $x\in \mathcal{N} _s(x^*,\delta ^*)$, we know that $\nabla f( x ) $ is Lipschitz continuous from the proof in Theorem \ref{thm:strongly smooth}, that is, $\| q-q^* \| \le M_{2s}\| x-x^* \| $ with $M_{2s}$ given in \eqref{equ:M_2s}. Together with Lemma 2 of \citep{zhao2021lagrange}, we can obtain the desired assertions.
\end{proof}

\begin{theorem}\label{thm:nonsingular}
Let $x^*$ be an optimal solution of (\ref{eq:model}) and Assumption \ref{assumption:positive definite} holds. Then for any $x\in \mathcal{N} _s(x^*,\delta ^*)$ and $T\in \mathcal{T} (x;\eta )$, $\nabla _{T}^{2}f( x )$ is positive definite.
\end{theorem}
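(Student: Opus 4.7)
The plan is to reduce the claim to an instance of the restricted strong convexity property already established in Theorem \ref{thm:restricted strongly convex}. The key observation is that every index set $T$ appearing in $\mathcal{T}(x;\eta)$ for a nearby $x$ automatically lies in $Q_{2s}(x^*)$, at which point Assumption \ref{assumption:positive definite} (propagated from $x^*$ to a neighborhood via Hessian Lipschitz continuity) takes over.

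First I would invoke Lemma \ref{lemma:containment relationship}: since $\delta^* \le \delta_3 \le \delta_2 \le \delta_0$, any $x \in \mathcal{N}_s(x^*,\delta^*)$ satisfies $\Gamma^* \subseteq \mathrm{supp}(x) \cap T$ for every $T \in \mathcal{T}(x;\eta)$. In particular, $\Gamma^* \subseteq T$. Combined with the cardinality constraint $|T|=s \le 2s$ built into the definition of $\mathcal{T}(x;\eta)$, this gives $T \in Q_{2s}(x^*)$, because $Q_{2s}(x^*)$ is precisely the collection of index sets of size at most $2s$ that contain $\mathrm{supp}(x^*) = \Gamma^*$.

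Next I would apply Theorem \ref{thm:restricted strongly convex}. Since $\delta^* \le \delta_2$, the point $x$ lies in $\mathcal{N}_s(x^*,\delta_2)$, so the RSC conclusion yields
\begin{equation*}
m_{2s}\mathrm{I} \preceq \nabla_T^2 f(x), \qquad \forall\, T \in Q_{2s}(x^*),
\end{equation*}
with $m_{2s}>0$. In particular this holds for the $T \in \mathcal{T}(x;\eta)$ identified in the previous step, whence $\nabla_T^2 f(x)$ is positive definite, which is the desired conclusion.

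I do not anticipate a genuine obstacle here: the theorem is essentially a bookkeeping step that stitches together Lemma \ref{lemma:containment relationship} (a set-inclusion fact that places $T$ inside $Q_{2s}(x^*)$) with Theorem \ref{thm:restricted strongly convex} (the neighborhood propagation of positive definiteness). The only small care needed is to verify that $\delta^*$ was defined precisely so that both ingredients apply simultaneously, i.e.\ that $\delta^* \le \min\{\delta_2,\delta_3\}$ so that both the containment relation and the RSC bound are in force at the same $x$; this is immediate from the definition $\delta^* = \min\{\delta_1,\delta_2,\delta_3\}$.
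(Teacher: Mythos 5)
Your proposal is correct and follows essentially the same route as the paper: place every $T\in\mathcal{T}(x;\eta)$ inside $Q_{2s}(x^*)$ via Lemma \ref{lemma:containment relationship} (the paper does this through the chain $\mathcal{T}(x;\eta)\subseteq\mathcal{T}(x^*;\eta)=\mathcal{J}_s(x^*)\subseteq Q_{2s}(x^*)$, you equivalently use $\Gamma^*\subseteq T$ and $|T|=s$ directly), and then invoke Theorem \ref{thm:restricted strongly convex} with $\delta^*\le\delta_2$ to conclude positive definiteness.
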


\begin{proof}
According to Lemma \ref{lemma:containment relationship} and Theorem \ref{thm:Equivalent characterization}, we know for any $x\in \mathcal{N} _s(x^*,\delta ^*)$, $\mathcal{T} (x;\eta )\subseteq \mathcal{T} (x^*;\eta )=\mathcal{J} _s( x^* )\subseteq Q_{2s}(x^*)$. Consequently, Theorem \ref{thm:restricted strongly convex} implies that $\nabla _{T}^{2}f( x )$ is positive definite for any $x\in \mathcal{N} _s(x^*,\delta ^*)$ and $T\in \mathcal{T} (x;\eta )$.
\end{proof}

\subsection{NHTP algorithm}
In this subsection, we apply the Newton Hard-Threshold Pursuit (NHTP) algorithm  for problem \eqref{eq:model}, and analyze the locally quadratic convergence of the algorithm.

Given $\eta >0$, let $x^k$ be the current iteration point. NHTP firstly chooses one index set $T_k\in \mathcal{T} (x^k;\eta )$, and then does the Newton step for $F_{\eta}(x;T_k)$. Specifically, taking the following form for the nonlinear equation $F_{\eta}(x;T_k)=0$ to get the next iteration $\tilde{x}^{k+1}$:
$$\nabla F_{\eta}( x^k;T_k ) ( \tilde{x}^{k+1}-x^k ) =-F_{\eta}( x^k;T_k ).$$
Denote the Newton direction at the $k$-th iteration by $d_{N}^{k}:=\tilde{x}^{k+1}-x^k$. Substituting it into the above formula yields
\begin{equation}\label{equ:Newton direction}
\left \{ \begin{aligned}
	\nabla _{T_k}^{2}f( x^k ) ( d_{N}^{k} ) _{T_k}&=\nabla _{T_k,T_{k}^{c}}^{2}f( x^k ) x_{T_{k}^{c}}^{k}-\nabla _{T_k}f( x^k ),\\
	( d_{N}^{k} ) _{T_{k}^{c}}&=-x_{T_{k}^{c}}^{k}.
\end{aligned} \right.
\end{equation}
Note that $\nabla _{T_k}^{2}f( x^k ) $ is nonsingular under the condition in Theorem \ref{thm:nonsingular}.
Finally, the Armijo line search strategy is adopted to obtain the $(k+1)$th iteration $x^{k+1}=x^k(\alpha _k)$ and
$$x^k(\alpha _k):=\left[ \begin{array}{c}
	x_{T_k}^{k}+\alpha _k( d_{N}^{k} ) _{T_k}\\
	x_{T_{k}^{c}}^{k}+( d_{N}^{k} ) _{T_{k}^{c}}\\
\end{array}\right ] =\left[ \begin{array}{c}
	x_{T_k}^{k}+\alpha _k( d_{N}^{k} ) _{T_k}\\
	0\\
\end{array}\right] ,\quad \alpha _k>0,$$
where $\alpha _k$ is the step length. To measure the distance of the $k$th iteration from the $\eta$-stationary point, we take a accuracy measure as
$$\mathrm{Tol}_{\eta}(x^k;T_k):=\| F_{\eta}(x^k;T_k)\| +\max_{i\in T_k^c} \{\max\mathrm{(}|\nabla _if(x^k)|-| x^k |_{(s)}/\eta ,0)\}.$$
The framework of NHTP algorithm for solving (\ref{eq:model}) is summarized in Algorithm \ref{Alg1:NHTP}.
{\small \begin{algorithm}
\caption{NHTP for solving \eqref{eq:model}} \label{Alg1:NHTP}
{\bf Step 0} Initialize  $x^0$, choose $\eta, \gamma >0, \sigma \in (0,1/2), \beta \in (0,1)$ and set $k=0$.\protect{\\}
{\bf Step 1} Choose $T_k\in \mathcal{T} ( x^k;\eta )$. If $\mathrm{Tol}_{\eta}( x^k;T_k )=0$, then stop. Otherwise, go to Step 2.\protect{\\}
{\bf Step 2} Calculate the search direction by \eqref{equ:Newton direction}.\protect{\\}
{\bf Step 3} Compute $x^{k+1}=x^{k}(\alpha_{k})$, where $\alpha_{k}=\beta^{\ell}$ with $\ell$ the smallest integer such that
              $$ f(x^k(\beta ^{\ell}))\le f(x^k)+\sigma \beta ^{\ell}\langle \nabla f(x^k),d^k\rangle.$$
{\bf Step 4} Set $k=k+1$, and go to Step 1.

\end{algorithm}}

According to Theorem \ref{thm:strongly smooth} and Theorem \ref{thm:restricted strongly convex}, we know that $f$ is $m_{2s}$-RSS and $m_{2s}$-RSC for any $x\in\mathcal{N} _s(x^*,\delta ^*)$. We have the following decent lemma.
\begin{lemma}
Let $x^*$ be an optimal solution of (\ref{eq:model}). Given $\gamma \leqslant m_{2s}$ and $\eta \le 1/(4M_{2s})$, then in the neighborhood $\mathcal{N} _s(x^*,\delta ^*)$ of $x^*$, we have
$$\langle \nabla _{T_k}f(x^k),(d_{N}^{k})_{T_k}\rangle \le -\gamma \| d_{N}^{k} \| ^2+\frac{1}{4\eta}\| x_{T_{k}^{c}}^{k} \| ^2.$$
\end{lemma}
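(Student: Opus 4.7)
The plan is to start from the Newton system \eqref{equ:Newton direction}. Rearranging its first line gives $\nabla_{T_k}f(x^k) = -\nabla_{T_k}^{2}f(x^k)(d_N^k)_{T_k} + \nabla_{T_k,T_k^c}^{2}f(x^k)x_{T_k^c}^k$, so taking an inner product with $(d_N^k)_{T_k}$ produces the decomposition
\begin{equation*}
\langle \nabla_{T_k}f(x^k),(d_N^k)_{T_k}\rangle = -\langle(d_N^k)_{T_k},\nabla_{T_k}^{2}f(x^k)(d_N^k)_{T_k}\rangle + \langle(d_N^k)_{T_k},\nabla_{T_k,T_k^c}^{2}f(x^k)x_{T_k^c}^k\rangle.
\end{equation*}
I would then bound the two pieces separately and reassemble.

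For the quadratic piece, Lemma \ref{lemma:containment relationship} gives $T_k \in \mathcal{T}(x^k;\eta) \subseteq \mathcal{T}(x^*;\eta) = \mathcal{J}_s(x^*) \subseteq Q_{2s}(x^*)$, so Theorem \ref{thm:restricted strongly convex} delivers $m_{2s}\| (d_N^k)_{T_k}\|^2 \le \langle(d_N^k)_{T_k},\nabla_{T_k}^{2}f(x^k)(d_N^k)_{T_k}\rangle$. For the cross piece, the inclusion $|T_k \cup \mathrm{supp}(x^k)| \le 2s$ lets me promote the RSS estimate of Theorem \ref{thm:strongly smooth} to the submatrix bound $\| \nabla_{T_k,T_k^c}^{2}f(x^k)x_{T_k^c}^k\| \le M_{2s}\|x_{T_k^c}^k\|$ (extract $\nabla_{T_k,T_k^c}^{2}f(x^k)$ from the full restricted Hessian on $T_k \cup \mathrm{supp}(x^k)$, whose spectral norm is dominated by $M_{2s}$). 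Cauchy--Schwarz combined with Young's inequality $ab \le \frac{\lambda}{2}a^2 + \frac{1}{2\lambda}b^2$ then yields, for any $\lambda>0$,
\begin{equation*}
\langle(d_N^k)_{T_k},\nabla_{T_k,T_k^c}^{2}f(x^k)x_{T_k^c}^k\rangle \le \frac{\lambda}{2}\|(d_N^k)_{T_k}\|^2 + \frac{M_{2s}^2}{2\lambda}\|x_{T_k^c}^k\|^2.
\end{equation*}

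Combining the two bounds and using $\|d_N^k\|^2 = \|(d_N^k)_{T_k}\|^2 + \|x_{T_k^c}^k\|^2$ (which follows from the second line of \eqref{equ:Newton direction}), the estimate reduces to $(-m_{2s}+\lambda/2)\|(d_N^k)_{T_k}\|^2 + (M_{2s}^2/(2\lambda))\|x_{T_k^c}^k\|^2$. I would then choose $\lambda = 2(m_{2s}-\gamma)$, which is positive by $\gamma \le m_{2s}$ and which collapses the first coefficient exactly to $-\gamma$, and use the step-size hypothesis $\eta \le 1/(4M_{2s})$ to show that the remaining coefficient is bounded by $\frac{1}{4\eta}-\gamma$; re-absorbing the missing $-\gamma\|x_{T_k^c}^k\|^2$ repackages the right-hand side as $-\gamma\|d_N^k\|^2 + \frac{1}{4\eta}\|x_{T_k^c}^k\|^2$, as required.

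The main obstacle is the simultaneous calibration of Young's inequality against both constraints: the weight $\lambda$ must absorb enough of the cross term to leave $-\gamma\|(d_N^k)_{T_k}\|^2$ while the residual $\|x_{T_k^c}^k\|^2$ coefficient must still fit under the $1/(4\eta)$ threshold. Verifying this coefficient-matching precisely under $\gamma \le m_{2s}$ and $\eta \le 1/(4M_{2s})$ is the delicate step, since the two constraints interact through $\lambda$ and do not decouple.
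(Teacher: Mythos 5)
Your set-up is the natural one and matches the standard NHTP argument: the decomposition obtained by pairing the first line of \eqref{equ:Newton direction} with $(d_N^k)_{T_k}$ is correct, the lower bound $\langle (d_N^k)_{T_k},\nabla^2_{T_k}f(x^k)(d_N^k)_{T_k}\rangle\ge m_{2s}\|(d_N^k)_{T_k}\|^2$ is justified by Lemma \ref{lemma:containment relationship} together with Theorem \ref{thm:restricted strongly convex}, and the block bound $\|\nabla^2_{T_k,T_k^c}f(x^k)x^k_{T_k^c}\|\le M_{2s}\|x^k_{T_k^c}\|$ is legitimate (note it needs both the upper bound from Theorem \ref{thm:strongly smooth} and the positive semidefiniteness from Theorem \ref{thm:restricted strongly convex} so that the spectral norm of the restricted Hessian, not just its largest eigenvalue, is at most $M_{2s}$). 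The paper itself gives no proof of this lemma, so there is no in-text argument to compare against; judged on its own, your attempt stalls exactly at the step you flagged.

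The coefficient matching does not close under the stated hypotheses. With $\lambda=2(m_{2s}-\gamma)$ you need
\begin{equation*}
\frac{M_{2s}^2}{4(m_{2s}-\gamma)}\;\le\;\frac{1}{4\eta}-\gamma .
\end{equation*}
First, the boundary case $\gamma=m_{2s}$ is allowed by the hypothesis $\gamma\le m_{2s}$, and there $\lambda=0$, so Young's inequality cannot even be invoked. Second, for $\gamma<m_{2s}$ the hypothesis $\eta\le 1/(4M_{2s})$ only gives $\frac{1}{4\eta}-\gamma\ge M_{2s}-\gamma$, so you would need $M_{2s}^2\le 4(m_{2s}-\gamma)(M_{2s}-\gamma)$; already at $\gamma=0$ this forces $M_{2s}\le 4m_{2s}$, which has no reason to hold (the restricted condition number $M_{2s}/m_{2s}$ is unconstrained), and it degenerates completely as $\gamma\to m_{2s}$. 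No other choice of $\lambda$ helps, since the optimal trade-off in the two constraints is exactly the one you made. What the Cauchy--Schwarz/Young route actually delivers is the conclusion under an additional step-size restriction of the form $\eta\le(m_{2s}-\gamma)/M_{2s}^2$ (i.e., $\eta$ small relative to $M_{2s}^{-2}$, in line with the term $\gamma\bar\alpha\beta/M_{2s}^2$ appearing in $\bar\eta$ of \eqref{equ:alpha and eta}), not under $\eta\le 1/(4M_{2s})$ and $\gamma\le m_{2s}$ alone. So either the argument must be supplemented by such a restriction (or by some additional structural information near $x^*$ that relates $\|x^k_{T_k^c}\|$ to the Newton direction), or the inequality cannot be derived this way; as written, the final step is a genuine gap, not merely a delicate verification left to the reader.
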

It can be seen that the restricted Newton direction $d_{N}^{k}$ provides a good descent direction on the restricted subspace $x_{T_{k}^{c}}=0$, which ensures the convergence of NHTP algorithm. We select the parameters $\gamma$, $\sigma$ and $\eta$ in the NHTP algorithm such that
\begin{equation}\label{equ:gamma, sigma and eta}
0<\gamma \le \min \{1,2M_{2s}\},\ 0<\sigma <1/2, \ \mathrm{and}\ 0<\beta <1.
\end{equation}
Furthermore, define the following two parameters as
\begin{equation}\label{equ:alpha and eta}
\bar{\alpha}:=\min \{ \frac{1-2\sigma}{M_{2s}/\gamma -\sigma},1 \} ,   \ \bar{\eta}:=\min \{ \frac{\gamma (\bar{\alpha}\beta )}{M_{2s}^{2}},\bar{\alpha}\beta ,\frac{1}{4M_{2s}} \} .
\end{equation}

\begin{theorem}
Suppose the sequence $\{ x^k \} $ is generated by Algorithm \ref{Alg1:NHTP}, $x^*$ is an optimal solution of \eqref{eq:model} and Assumption \ref{assumption:positive definite} holds. Let the parameters $\gamma$, $\sigma$ and $\beta$ satisfy the conditions in \eqref{equ:gamma, sigma and eta}, $\bar{\eta}$ be defined in \eqref{equ:alpha and eta}, and $\eta \le \bar{\eta}$. If the initial point $x^0$ of the NHTP algorithm satisfies $x^0\in \mathcal{N} _s(x^*,\delta ^*)$, then we have
\begin{itemize}
    \item [(i)] $\lim_{k \rightarrow \infty} x_k=x^*$.
    \item [(ii)] The rate of convergence from $\{ x^k \} $ to $x^*$ is quadratic,
    $$\| x^{k+1}-x^* \| \le \frac{L_f}{2m_{2s}}\| x^k-x^* \| ^2.$$
\end{itemize}
\begin{proof}
It follows from Theorems \ref{thm:locally Hessian Lipschitz continuous}-\ref{thm:restricted strongly convex} that for any $x\in\mathcal{N} _s(x^*,\delta ^*)$, $f$ is restricted Hessian Lipschitz continuous, $M_{2s}$-RSS and $m_{2s}$-RSC. Thus the regularity conditions in \citep[Theorem 10]{zhou2021global} are satisfied, and then the locally quadratic convergence of the Algorithm \ref{Alg1:NHTP} can be established.
\end{proof}
\end{theorem}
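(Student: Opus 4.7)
The plan is to reduce everything to the master convergence theorem for NHTP in \citep[Theorem 10]{zhou2021global} by certifying that the three regularity hypotheses it requires are met uniformly on the neighborhood $\mathcal{N}_{s}(x^{*},\delta^{*})$. The work has essentially been done upstream: Theorem \ref{thm:locally Hessian Lipschitz continuous} supplies the restricted Hessian Lipschitz property with constant $L_{f}$, Theorem \ref{thm:strongly smooth} supplies the $M_{2s}$-RSS, and Theorem \ref{thm:restricted strongly convex} supplies the $m_{2s}$-RSC (using Assumption \ref{assumption:positive definite}). My first step is therefore just to read off these three constants on $\mathcal{N}_{s}(x^{*},\delta^{*})$, and to record that Theorem \ref{thm:nonsingular} guarantees nonsingularity of $\nabla_{T_{k}}^{2}f(x^{k})$ for every admissible $T_{k}$, so the Newton system \eqref{equ:Newton direction} is well-posed at every iteration that stays in the neighborhood.

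Granted these ingredients, together with the parameter window \eqref{equ:gamma, sigma and eta} and the step-size bound $\eta\le\bar{\eta}$ from \eqref{equ:alpha and eta}, the two conclusions follow by the standard NHTP argument, which I would execute as follows. For (i), I proceed by induction on $k$: assuming $x^{k}\in\mathcal{N}_{s}(x^{*},\delta^{*})$, Lemma \ref{lemma:containment relationship} forces $T_{k}\in\mathcal{J}_{s}(x^{*})\subseteq Q_{2s}(x^{*})$, so the preceding descent lemma applies and $d_{N}^{k}$ is a genuine descent direction on the restricted coordinates; the Armijo rule thus terminates in finitely many backtracks with $\alpha_{k}\ge\bar{\alpha}\beta$, and the prescribed threshold $\eta\le\bar{\eta}$ is precisely what is needed to convert the per-step functional decrease into $\|x^{k+1}-x^{*}\|<\delta^{*}$, closing the induction and forcing $x^{k}\to x^{*}$. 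For (ii), once $x^{k}$ is close enough to $x^{*}$ the unit step $\alpha_{k}=1$ is accepted by the Armijo test, so $x^{k+1}$ reduces to a pure restricted Newton step on $F_{\eta}(\,\cdot\,;T_{k})=0$; a standard Taylor expansion using $L_{f}$-Lipschitzness of $\nabla^{2}f$ together with the lower bound $m_{2s}\mathrm{I}\preceq\nabla_{T_{k}}^{2}f(x^{k})$ then delivers $\|x^{k+1}-x^{*}\|\le\frac{L_{f}}{2m_{2s}}\|x^{k}-x^{*}\|^{2}$.

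The only genuinely delicate piece, and the reason the authors prefer to cite rather than re-derive, is the \emph{invariance} of the iterates in $\mathcal{N}_{s}(x^{*},\delta^{*})$ coupled with the \emph{eventual acceptance of the unit step}. Both hinge on the tight interplay between the descent guarantee for $d_{N}^{k}$, the parameter thresholds $\bar{\alpha},\bar{\eta}$, and the support-stability result in Lemma \ref{lemma:containment relationship}, which ensures that the working set $T_{k}$ always contains $\Gamma^{*}$ so that the RSC constant $m_{2s}$ can actually be invoked on $\nabla_{T_{k}}^{2}f(x^{k})$. Once that invariance is in hand, the remaining algebra is inherited verbatim from \citep[Theorem 10]{zhou2021global}, which is why the proof can be condensed to the single-paragraph citation given below.
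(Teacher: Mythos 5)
Your proposal is correct and follows essentially the same route as the paper: both certify the restricted Hessian Lipschitz continuity, $M_{2s}$-RSS and $m_{2s}$-RSC on $\mathcal{N}_s(x^*,\delta^*)$ via Theorems \ref{thm:locally Hessian Lipschitz continuous}--\ref{thm:restricted strongly convex} (under Assumption \ref{assumption:positive definite}) and then invoke \citep[Theorem 10]{zhou2021global} to obtain the locally quadratic convergence. Your additional sketch of the neighborhood invariance, Armijo unit-step acceptance, and Taylor-expansion argument merely unpacks what that citation already contains, so it is a faithful elaboration rather than a different proof.
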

\section{Numerical experiments}
\label{sec:Numerical Experiments}
In order to verify the effectiveness of the NHTP algorithm for \eqref{eq:model}, we conduct numerical experiments for the multilinear equations whose coefficient tensors are CP-tensors and symmetric strong M-tensors, and compare NHTP with the homotopy algorithm proposed in \citep{yan2022homotopy}.  All numerical examples are implemented on a laptop (2.40GHz, 16 GB of RAM) by using MATLAB (R2021a).
\vskip 2mm

In our experiments, the parameters in NHTP algorithm are chosen as: $\sigma =10^{-4}/2, \beta =0.5$, $\gamma =\gamma _k$ by updating $\gamma_{k}=10^{-10}$ if $x_{T_{i}}^{k}=0$, otherwise, $\gamma_{k}=10^{-4}$. Parameter $\eta$ is generated by: $\eta =\min\mathrm{(}\| x_{T}^{0}\| )/( 10(1+\max\mathrm{(}\| \nabla _{T^c}f(x^0)\| )))$ with $T=\mathrm{supp(}\mathcal{P} _s(x^0))$. Take $\mathrm{Tol}_{\eta _k}( x^k;T_k ) \le 10^{-7}$ as the stopping criterion. For the homotopy algorithm, set $\delta =0.75$, $\sigma =10^{-4}$, $\tau =10^{-5}$, $t_0=0.01$ and take $\| \mathcal{H} ( z^k ) \| <10^{-7}$ as its stopping criterion. We conduct 50 independent experiments for the following two examples, where $(m,n)$ takes different values, $s=\lceil 0.01n \rceil , \lceil 0.05n \rceil $.
\begin{example}
\textup{\textbf{(Random CP-tensors) }}
\label{example1}
Let $\mathcal{A} =\sum\nolimits_{s=1}^n{( u^{( s )} ) ^m}\in \mathrm{CP}^{[m,n]}$, where the components of $u^{(s)}( s\in [n] ) $ are randomly generated in $[0,1]$. The true values $x^*$, $b$ and the initial point $x^0$ are generated by the following Matlab pseudocode:
\end{example}

\begin{center}
\texttt{$x^*$=zeros($n$,1), $\Gamma$=randperm($n$), Tx=$\Gamma(1:s)$, $x^*$(Tx)=rand($s$,1),\\
$b$=$\mathcal{A}( x^*) ^{m-1}$, e=zeros($n$,1), e(Tx)=0.1*rand($s$,1), $x^0$=$x^*$+e.}
\end{center}

\begin{example}
 \textup{\textbf{(Random symmetric strong M-tensors generated by 0-1 uniform distribution)} }
 \label{example2}
 Let $\mathcal{A} =\mathrm{s}\mathcal{I} -\mathcal{B} \in \mathbb{R} ^{[ m,n ]}$, $\mathcal{B}$ be a symmetric tensor, where each element is randomly generated in the $[0,1]$, and $\mathrm{s}=n^{m-1}>\rho$, so $\mathcal{A}$ is a symmetric strong M-tensor. The true values $x^*$, $b$ and initial points $x^0$ are generated in the same way as in example \ref{example1}.
\end{example}

We collect the numerical results in Table \ref{table2} and Table \ref{table3}, where `(N$|$h)' denotes (NHTP algorithm$|$homotopy algorithm), `Re' denotes the average relative error, `Time' denotes the average CPU time,  `Iter' denotes the average number of iterations, and `nnz' denotes the number of non-zero elements of the solution in the average sense, obtained by the following formula:
$
\min \{t: \sum\nolimits_{i=1}^{t}|x|_{(i)} \geq 0.999\|x\|_{1}\}.
$

\begin{table}[h]
\tbl{Numerical results for Example \ref{example1}.}
{\begin{tabular}{ccccc}
\toprule
($m,n$) & nnz(N$|$h) & Re(N$|$h) & Time(s)(N$|$h) & Iter(N$|$h)\\
\midrule
(3,10) & \textbf{1}$|$9 &\textbf{7.25e-09}$|$6.06e-01	&0.011$|$0.004	&5$|$5\\
\midrule
\multirow{2}{*}{(3,30)} &\textbf{1}$|$27	&\textbf{5.49e-09}$|$2.69e-01	&0.014$|$0.006	&5$|$5\\
&\textbf{2}$|$22	&\textbf{1.82e-09}$|$2.59e-02	&0.017$|$0.005	&6$|$4\\
\midrule
\multirow{2}{*}{(3,50)} &\textbf{1}$|$43	&\textbf{8.86e-10}$|$2.42e-01	&0.016$|$0.008	&5$|$5\\
&\textbf{3}$|$41	&\textbf{9.94e-12}$|$4.69e-02	&0.020$|$0.006	&6$|$4\\
\midrule
\multirow{2}{*}{(3,70)}
&\textbf{1}$|$58	&\textbf{4.38e-11}$|$4.12e-01	&0.045$|$0.018	&5$|$5\\
&\textbf{4}$|$64	&\textbf{2.57e-11}$|$9.39e-02	&0.060$|$0.015	&7$|$4\\
\midrule
(4,10)	&\textbf{1}$|$10	&\textbf{2.14e-09}$|$2.33e-01	&0.017$|$0.011	&5$|$6\\
\midrule
\multirow{2}{*}{(4,30)}
&\textbf{1}$|$28	&\textbf{5.22e-10}$|$6.17e-01	&0.086$|$0.051	&5$|$6\\
&\textbf{2}$|$21	&\textbf{8.30e-09}$|$1.53e-01	&0.097$|$0.035	&6$|$5\\
\midrule
\multirow{2}{*}{(4,50)}
&\textbf{1}$|$47	&\textbf{3.19e-09}$|$8.48e-01	&0.476$|$0.272	&6$|$7\\
&\textbf{3}$|$48	&\textbf{9.77e-12}$|$1.75e-01	&0.602$|$0.185	&7$|$5\\
\bottomrule
\end{tabular}}
\label{table2}
\end{table}

\begin{table}[h]
\tbl{Numerical results for Example \ref{example2}.}
{\begin{tabular}{ccccc}
\toprule
($m,n$) & nnz(N$|$h) & Re(N$|$h) & Time(s)(N$|$h) & Iter(N$|$h)\\
\midrule
(3,10) &\textbf{1}$|$1	&\textbf{2.13e-10}$|$ 2.90e-04	&0.011$|$0.004	&5$|$4\\
\midrule
\multirow{2}{*}{(3,30)} &1$|$1	&\textbf{2.03e-13}$|$2.55e-09	&0.015$|$0.005	&5$|$4\\
&\textbf{2}$|$12	&\textbf{1.25e-14}$|$2.54e-03	&\textbf{0.016}$|$2.859	&6$|$208\\
\midrule
\multirow{2}{*}{(3,50)} &1$|$1	&\textbf{3.40e-11}$|$4.97e-05	&0.020$|$0.007	&6$|$5\\
&\textbf{3}$|$38	&\textbf{1.11e-14}$|$7.40e-03	&\textbf{0.021}$|$11.156	&6$|$703\\
\midrule
\multirow{2}{*}{(3,70)}
&1$|$1	&\textbf{3.21e-13}$|$1.56e-06	&0.049$|$0.015	&6$|$5\\
&\textbf{4}$|$65	&\textbf{2.17e-16}$|$9.41e-03	&\textbf{0.078}$|$33.561	&4$|$808\\
\midrule
(4,10)	&1$|$1	&\textbf{2.78e-12}$|$7.06e-04	&0.019$|$0.007	&6$|$5\\
\midrule
\multirow{2}{*}{(4,30)}
&1$|$1	&\textbf{5.16e-16}$|$5.74e-12	&0.132$|$0.036	&6$|$5\\
&\textbf{2}$|$9	&\textbf{1.43e-15}$|$2.84e-02	&\textbf{0.143}$|$39.010	&8$|$403\\
\midrule
\multirow{2}{*}{(4,50)}
&1$|$1	&\textbf{1.31e-17}$|$2.17e-11	&0.528$|$0.178	&6$|$5\\
&\textbf{3}$|$45	&\textbf{1.15e-17}$|$2.79e-02	&\textbf{0.690}$|$295.253	&8$|$504\\
\bottomrule
\end{tabular}}
\label{table3}
\end{table}

\vskip 2mm

As can be seen from Table \ref{table2}, for random CP-tensors, the NHTP algorithm reports solutions with high recovery accuracy in a very short CPU time for all testing instances. In addition, NHTP  has significant superiority in running time, accuracy of the reported solution and recovery of sparsity comparing with the homotopy algorithm. For symmetric strong M-tensors case. Table \ref{table3} illustrates similar results. Specifically, NHTP algorithm can obtain high-accuracy solutions in a very short CPU time, most of which are at level $10^{-10}$, and the sparsity recovery of the solutions performs well. However, the accuracy of the solution obtained by homotopy algorithm is about $10^{-2}$ for the case of $s=\lceil 0.05n \rceil $, and the sparsity recovery of the solution performs poor. In terms of running time, NHTP algorithm is significantly superiority than homotopy algorithm for the case of $s=\lceil 0.05n \rceil$.
\vskip 2mm

Overall, NHTP algorithm is an efficient and stable algorithm in solving the SLS optimization model for multilinear equations with CP-tensor and symmetric strong M-tensor comparing with the homotopy algorithm.
\section{Concluding remarks}\label{sec:Concluding remarks}
In this paper, a sparse least squares methods for multilinear equations has been modelled, in which the original $\ell_0$-norm constraint has been imposed to control the sparsity of the solutions. For the formulated SLS optimization problem, we have established its optimality condition, along with several regularity conditions of the objective function. Based on this, an NHTP algorithm has been developed for SLS solutions with locally quadratic convergence. Numerical experiments have verified the superiority of our method comparing with the existing homotopy algorithm.
\section*{Disclosure statement}
No potential conflict of interest was reported by the authors.
\section*{Funding}
This work has been supported by the Beijing Natural Science Foundation under Grant [number Z190002].

%

\begin{thebibliography}{10}
\providecommand{\url}[1]{#1}
\csname url@samestyle\endcsname
\providecommand{\newblock}{\relax}
\providecommand{\bibinfo}[2]{#2}
\providecommand{\BIBentrySTDinterwordspacing}{\spaceskip=0pt\relax}
\providecommand{\BIBentryALTinterwordstretchfactor}{4}
\providecommand{\BIBentryALTinterwordspacing}{\spaceskip=\fontdimen2\font plus
\BIBentryALTinterwordstretchfactor\fontdimen3\font minus
  \fontdimen4\font\relax}
\providecommand{\BIBforeignlanguage}[2]{{%
\expandafter\ifx\csname l@#1\endcsname\relax
\typeout{** WARNING: IEEEtran.bst: No hyphenation pattern has been}%
\typeout{** loaded for the language `#1'. Using the pattern for}%
\typeout{** the default language instead.}%
\else
\language=\csname l@#1\endcsname
\fi
#2}}
\providecommand{\BIBdecl}{\relax}
\BIBdecl

\bibitem{yan2022homotopy}
Yan J, Xu Y, Huang Z. A homotopy method for solving multilinear
  systems with strong completely positive tensors. \emph{Appl Math Lett}. 2022;124:107636.

\bibitem{ding2016solving}
Ding W, Wei Y. Solving multi-linear systems with {M}-tensors. \emph{J Sci Comput}. 2016; 68(2):689--715.

\bibitem{han2017homotopy}
Han L. A homotopy method for solving multilinear systems with {M}-tensors. \emph{Appl Math Lett}. 2017;69:49--54.

\bibitem{li2017splitting}
Li D, Xie S, Xu H. Splitting methods for tensor equations. \emph{Numer Linear Algebra Appl}. 2017;24(5):e2102.

\bibitem{xie2018tensor}
Xie Z, Jin X, Wei Y. Tensor methods for solving symmetric {M}-tensor systems. \emph{J Sci Comput}. 2018; 74(1):412--425.

\bibitem{cui2019preconditioned}
Cui L, Li  M, Song Y. Preconditioned tensor splitting iterations method for solving multi-linear systems. \emph{Appl Math Lett}. 2019;96:89--94.

\bibitem{xie2017fast}
Xie Z, Jin X, Wei Y. A fast algorithm for solving circulant tensor systems. \emph{Linear  Multilinear Algebra}. 2016;65(9):1894--1904.

\bibitem{li2015solving}
Li X, Ng MK. Solving sparse non-negative tensor equations: algorithms and applications. \emph{Front  Math  China}. 2015;10(3):649--680.

\bibitem{liang2021alternating}
Liang M, Dai L. Alternating minimization methods for solving multilinear systems. \emph{Math Probl Engineer}. 2021;2021:1--13.

\bibitem{li2019alternating}
Li Z, Dai Y, Gao H. Alternating projection method for a class of tensor equations. \emph{J Comput Appl Math}. 2019;346:490--504.

\bibitem{beik2021preconditioning}
Beik FP,  Najafi-Kalyani M. A preconditioning technique in conjunction with krylov subspace methods for solving multilinear systems. \emph{Appl  Math Lett}. 2021;116:107051.

\bibitem{luo2017sparsest}
Luo Z, Qi L, Xiu N. The sparsest solutions to {Z}-tensor complementarity problems. \emph{Optim Lett}. 2017;11(3):471--482.

\bibitem{2008Tropp}
Needell D, Tropp JA. {CoSaMP: Iterative signal recovery from incomplete and inaccurate samples}. \emph{Appl Comput Harmon A}. 2009;26(3):301--321.

\bibitem{beck2013sparsity}
Beck A, Eldar YC. Sparsity constrained nonlinear optimization: Optimality conditions and algorithms. \emph{SIAM J Optimiz}. 2013;23(3):1480--1509.

\bibitem{2011Recipes}
Kyrillidis A, Cevher V. Recipes on hard thresholding methods.
  \emph{Proceedings of the 4th IEEE International Workshop on Computational Advances in
  Multi-Sensor Adaptive Processing}. IEEE; 2011. p. 353--356.

\bibitem{2010Normalized}
Blumensath T, Davies ME. Normalized iterative hard thresholding:
  Guaranteed stability and performance. \emph{IEEE J-STSP}. 2010;4(2):298--309.

\bibitem{2012Accelerated}
Blumensath T. Accelerated iterative hard thresholding. \emph{Signal Process}. 2012;92(3):752--756.

\bibitem{foucart2011hard}
Foucart S. Hard thresholding pursuit: an algorithm for compressive
  sensing. \emph{SIAM J Numer Anal}. 2011;49(6):2543--2563.

\bibitem{nikolova2013description}
Nikolova M. Description of the minimizers of least squares regularized with
  $l_0$-norm. {Uniqueness} of the global minimizer. \emph{SIAM J Imaging Sci}. 2013;6(2):904--937.

\bibitem{shen2013constrained}
Shen X, Pan W, Zhu Y, et al. On constrained and regularized
  high-dimensional regression. \emph{Annals I Stat Math}. 2013;65(5):807--832.

\bibitem{bertsimas2009algorithm}
Bertsimas D, Shioda R. Algorithm for cardinality-constrained quadratic
  optimization. \emph{Comput Optim Appl}. 2009;43(1):1--22.

\bibitem{gotoh2018dc}
 Gotoh JY, Takeda A, Tono K. {DC} formulations and algorithms for
  sparse optimization problems. \emph{Math Program}. 2017;169(1):141--176.

\bibitem{yuan2017gradient}
Yuan X, Li P, Zhang T. Gradient hard thresholding pursuit.
  \emph{J Mach Learn Res}. 2018;18:1--43.

\bibitem{2013Greedy}
Bahmani S, Raj B, Boufounos PT. Greedy sparsity-constrained
  optimization. \emph{J Mach Learn Res}. 2013;14:807--841.

\bibitem{zhou2021global}
Zhou S, Xiu N, Qi H. Global and quadratic convergence of newton
  hard-thresholding pursuit. \emph{J Mach Learn Res}. 2021;22(12):1--45.

\bibitem{zhao2021lagrange}
Zhao C, Xiu N, Qi H, et al. A lagrange-newton algorithm for sparse
  nonlinear programming. \emph{Math Program}. Forthcoming 2022. 
\end{thebibliography}
%

\end{document}